\newtheorem{theorem}{Theorem}[section]
\newtheorem{lemma}[theorem]{Lemma}
\newtheorem{cor}[theorem]{Corollary}
\theoremstyle{definition}
\newtheorem{definition}[theorem]{Definition}
\newtheorem{que}[theorem]{Question}
\theoremstyle{remark}
\numberwithin{equation}{section}
\let \la=\lambda
\let \e=\varepsilon
\let \d=\delta
\let \a=\alpha
\let \b=\beta
\let \ga=\gamma
\begin{document}

\title[On a dual property of the maximal operator]
{On a dual property of the maximal operator on weighted variable $L^p$ spaces}

\author{Andrei K. Lerner}
\address{Department of Mathematics,
Bar-Ilan University, 5290002 Ramat Gan, Israel}
\email{lernera@math.biu.ac.il}

\thanks{This research was supported by the Israel Science Foundation (grant No. 953/13).}

\begin{abstract}
L. Diening \cite{D1} obtained the following dual property of the
maximal operator $M$ on variable Lebesque spaces $L^{p(\cdot)}$: if $M$ is bounded on $L^{p(\cdot)}$, then
$M$ is bounded on $L^{p'(\cdot)}$.
We extend this result to weighted variable Lebesque spaces.
\end{abstract}

\keywords{Maximal operator, variable Lebesgue spaces, weights.}

\subjclass[2010]{42B20, 42B25, 42B35}
\maketitle

\section{Introduction}

Given a measurable function $p:{\mathbb R}^n\to [1,\infty)$,
denote by $L^{p(\cdot)}$ the space of functions $f$ such
that for some $\la>0$,
$$\int_{{\mathbb R}^n}|f(x)/\la|^{p(x)}dx<\infty,$$
with norm
$$\|f\|_{L^{p(\cdot)}}=\inf\left\{\la>0:\int_{{\mathbb
R}^n}|f(x)/\la|^{p(x)}dx\le 1\right\}.$$
Set
$p_-\equiv\displaystyle \operatornamewithlimits{ess\,
inf}_{x\in\mathbb{R}^n} p(x)$ and $p_+\equiv\displaystyle
\operatornamewithlimits{ess\, sup}_{x\in\mathbb{R}^n} p(x)$.

Let $M$ be the Hardy-Littlewood maximal operator defined by
$$Mf(x)=\sup_{Q\ni x}\frac{1}{|Q|}\int_Q|f(y)|dy,$$
where the supremum is taken over all cubes $Q\subset {\mathbb R}^n$ containing the point $x$.

In \cite{D1}, L. Diening proved the following remarkable result: if $p_->~1$, $p_+<\infty$ and $M$ is bounded on $L^{p(\cdot)}$,
then $M$ is bounded on $L^{p'(\cdot)}$, where $p'(x)=\frac{p(x)}{p(x)-1}$.
Despite its apparent simplicity, the proof in \cite{D1}
is rather long and involved.

In this paper we extend Diening's theorem to weighted variable Lebesgue spaces $L^{p(\cdot)}_w$
equipped with norm
$$\|f\|_{L^{p(\cdot)}_w}=\|fw\|_{L^{p(\cdot)}}.$$
We assume that a weight $w$ here is a non-negative function such that $w(\cdot)^{p(\cdot)}$ and $w(\cdot)^{-p'(\cdot)}$
are locally integrable.
The spaces $L^{p(\cdot)}_w$ have been studied in numerous works; we refer to the monographs \cite{CF, DHHR} for a detailed
bibliography.

Recall that a non-negative locally integrable function $v$ satisfies the Muckenhoupt $A_r, 1<r<\infty,$ condition if
$$\sup_{Q}\left(\frac{1}{|Q|}\int_Qv\,dx\right)\left(\frac{1}{|Q|}\int_Qv^{-\frac{1}{r-1}}\,dx\right)^{r-1}<\infty.$$
Set $A_{\infty}=\cup_{r>1}A_r$.

Our main result is the following.

\begin{theorem}\label{mr}
Let $p:{\mathbb R}^n\to [1,\infty)$ be a measurable function such that $p_->1$ and $p_+<\infty$. Let $w$ be a weight such that $w(\cdot)^{p(\cdot)}\in A_{\infty}$.
If $M$ is bounded on $L^{p(\cdot)}_w$, then $M$ is bounded on $L^{p'(\cdot)}_{w^{-1}}$.
\end{theorem}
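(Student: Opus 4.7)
The plan is to establish the theorem by combining the pointwise sparse domination of $M$ with the self-duality of sparse operators in the variable weighted Lebesgue scale. Recall that for every $f\ge 0$ there exists a $\tfrac14$-sparse family $\mathcal{S}=\mathcal{S}(f)$ of dyadic cubes (from a fixed finite union of shifted dyadic grids) with
$$
Mf(x)\;\le\; C_n \sum_{Q\in \mathcal{S}}\langle f\rangle_Q\,\chi_Q(x)\;=:\;C_n\, T_\mathcal{S} f(x).
$$
Each positive operator $T_\mathcal{S}$ is self-adjoint for the $L^2$-pairing, and since $(L^{p(\cdot)}_w)^*\cong L^{p'(\cdot)}_{w^{-1}}$ with equivalent norms under the assumptions $1<p_-\le p_+<\infty$, this self-duality carries over to our setting. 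Consequently, for $g\ge 0$ in $L^{p'(\cdot)}_{w^{-1}}$, a standard duality argument (test against $h\ge 0$ with $\|h\|_{L^{p(\cdot)}_w}\le 1$ and move $T_{\mathcal{S}(g)}$ onto $h$ via self-adjointness) yields
$$
\|Mg\|_{L^{p'(\cdot)}_{w^{-1}}}\;\le\; C\,\|g\|_{L^{p'(\cdot)}_{w^{-1}}}\cdot\sup_{\mathcal{S}}\|T_{\mathcal{S}}\|_{L^{p(\cdot)}_w\to L^{p(\cdot)}_w},
$$
reducing the theorem to a uniform-in-$\mathcal{S}$ bound for sparse operators on $L^{p(\cdot)}_w$.

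The core step is therefore to upgrade the hypothesized boundedness of $M$ on $L^{p(\cdot)}_w$ to uniform boundedness of all sparse operators $T_\mathcal{S}$ on the same space. This is where the assumption $w^{p(\cdot)}\in A_\infty$ enters. The strategy is to exploit the reverse H\"older inequality available for $A_\infty$ weights: on each cube $Q$, it permits one to ``freeze'' the variable exponent at a representative constant value and compare $\|w\chi_Q\|_{L^{p(\cdot)}}$ with a classical integral average of $w^{p(\cdot)}$. The resulting local constant-exponent estimate is then reassembled into a global estimate via a Carleson-type embedding driven by the sparseness of $\mathcal{S}$ and by the hypothesized $L^{p(\cdot)}_w$-boundedness of $M$.

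The main obstacle is exactly this upgrade: showing that $M$ and all sparse operators $T_\mathcal{S}$ share the same boundedness properties on $L^{p(\cdot)}_w$ whenever $w^{p(\cdot)}\in A_\infty$ is the variable-exponent analogue of a classical Muckenhoupt-type equivalence, and it is the only place where the argument engages nontrivially with the interaction between the variable exponent and the weight. Once this is in place, the conclusion follows formally from the duality chain above, and, notably, no further regularity hypothesis needs to be imposed on $w^{-p'(\cdot)}$, as it is absorbed automatically through the self-adjointness of~$T_\mathcal{S}$.
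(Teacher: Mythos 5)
Your high-level reduction---pointwise sparse domination $Mf\lesssim T_{\mathcal S(f)}f$, formal self-adjointness of $T_{\mathcal S}$, and the identification $(L^{p(\cdot)}_w)'=L^{p'(\cdot)}_{w^{-1}}$---is correct and closely parallels the mechanism of Theorem \ref{char} in the paper, where the adjoint sparse operator $\mathcal M^\star_{\mathcal S}$ plays the role of your $T_{\mathcal S}$. What is missing is precisely the part you flag as the main obstacle: a non-circular proof that $T_{\mathcal S}$ is uniformly bounded on $L^{p(\cdot)}_w$.

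This is where the gap is genuine. Under the standing hypotheses, uniform boundedness of sparse operators on $X=L^{p(\cdot)}_w$ is \emph{logically equivalent} to the conclusion that $M$ is bounded on $X'$: the natural way to estimate $\int T_{\mathcal S}f\cdot g\,dx=\sum_{Q}|Q|\,f_Q\,g_Q$ is to dominate it by $c_\eta\int Mf\cdot Mg\,dx\le c_\eta\|Mf\|_X\|Mg\|_{X'}$, which presupposes exactly what you want to prove. So the reduction reparametrizes the problem without lowering its difficulty. Your sketch of the core step---reverse H\"older for $w^{p(\cdot)}$, freezing the exponent on each cube, a Carleson-type embedding---names the right ingredients (they do appear in the paper's Lemmas \ref{rh}--\ref{RH} and \ref{key}, in particular the median $m_p(Q)$ and the auxiliary measure $b$), but it does not explain how the hypothesized $L^{p(\cdot)}_w$-boundedness of $M$ and the $A_\infty$ hypothesis on $w^{p(\cdot)}$ combine into a \emph{closed} estimate for $T_{\mathcal S}$. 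The paper breaks the circularity by inserting a genuinely different intermediate condition, the ``small subsets'' $A_\infty$-type property (ii) of Theorem \ref{char}, which depends only on $X$ and is verified for $L^{p(\cdot)}_w$ by the Diening-type reverse-H\"older machinery; it then passes from (ii) and $M:X\to X$ to the sparse/adjoint bound through an \emph{absorption} argument, splitting $\mathcal M^\star_{\mathcal S}f$ into a $\nu Mf$ part and a tail over thin layers $Q_j^k\cap(\Omega_{k+l}\setminus\Omega_{k+l+1})$ that is reabsorbed into $\|\mathcal M^\star_{\mathcal S}f\|_X$ once $\nu$ is large. That absorption mechanism is what actually closes the loop, and it has no counterpart in your proposal; without it the plan does not go through.
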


The relevance of the condition $w(\cdot)^{p(\cdot)}\in A_{\infty}$ in this theorem will be discussed in Section 6 below.

Notice that $L^{p'(\cdot)}_{w^{-1}}$ is the associate space of $L^{p(\cdot)}_w$, namely, $\big(L^{p(\cdot)}_w\big)'=L^{p'(\cdot)}_{w^{-1}}$
(see Sections 2.1 and 2.2). Hence, it is desirable to characterize Banach function spaces $X$ with the property that the boundedness of $M$ on $X$ implies
the boundedness of $M$ on $X'$. In Section~3, we obtain such a characterization in terms of an $A_{\infty}$-type property of~$X$. However, a verification of this
property in the case of $X=L^{p(\cdot)}_w$ is not as simple. In doing so, we use some ingredients developed by L.~Diening in \cite{D1} (Lemmas \ref{rh} and \ref{strf}).
We slightly simplified their proofs and we give them here in order to keep the paper essentially self-contained.

\section{Preliminaries}

\subsection{Banach function spaces}
Denote by ${\mathcal M}^+$ the set of Lebesgue measurable non-negative functions on ${\mathbb R}^n$.

\begin{definition}\label{BFS}
By a Banach function space (BFS) $X$ over ${\mathbb R}^n$ equipped with Lebesque measure we mean a collection of functions $f$ such
that
$$\|f\|_{X}=\rho(|f|)<\infty,$$
where $\rho: {\mathcal M}^+\to [0,\infty]$ is a mapping satisfying
\begin{enumerate}
\renewcommand{\labelenumi}{(\roman{enumi})}
\item
$\rho(f)=0\Leftrightarrow f=0$ a.e.; $\rho(\a f)=\a \rho(f), \a\ge 0$;

\noindent
$\rho(f+g)\le \rho(f)+\rho(g)$;
\item $g\le f\,\,\text{a.e.}\,\,\Rightarrow \rho(g)\le \rho(f)$;
\item $f_n\uparrow f \,\,\text{a.e.}\,\,\Rightarrow \rho(f_n)\uparrow \rho(f)$;
\item if $E\subset {\mathbb R}^n$ is bounded, then $\rho(\chi_E)<\infty$;
\item if $E\subset {\mathbb R}^n$ is bounded, then $\int_Efdx\le c_E\rho(f)$.
\end{enumerate}
\end{definition}

Note that it is more common to require that $E$ is a set of finite measure in (iv) and (v) (see, e.g., \cite{BS}). However, our
choice of axioms allows us to include weighted variable Lebesque spaces $L^{p(\cdot)}_w$ (with the assumption
that $w(\cdot)^{p(\cdot)}, w(\cdot)^{-p'(\cdot)}\in L^1_{loc}$) in a general framework of Banach function spaces.
Moreover, it is well known that all main elements of a general theory work with (iv) and (v) stated for bounded sets
(see, e.g., \cite{Lux}). We mention only the next two key properties that are of interest for us.
The first property says that if $X$ is a BFS, then the associate space $X'$ consisting of $f$ such that
$$\|f\|_{X'}=\sup_{g\in X:\|g\|_{X}\le 1}\int_{{\mathbb R}^n}|fg|\,dx<\infty$$
is also a BFS. The second property is the Lorentz-Luxemburg theorem saying that $X=X''$ and $\|f\|_X=\|f\|_{X''}$.

The definition of $\|f\|_{X'}$ implies that
\begin{equation}\label{hold}
\int_{{\mathbb R}^n}|fg|dx\le \|f\|_{X}\|g\|_{X'},
\end{equation}
and the fact that $\|f\|_X=\|f\|_{X''}$ yields
\begin{equation}\label{repr}
\|f\|_{X}=\sup_{g\in X':\|g\|_{X'}\le 1}\int_{{\mathbb R}^n}|fg|\,dx.
\end{equation}

\subsection{Variable $L^p$ spaces}
It is well known (see \cite{CF} or \cite{DHHR}) that if $p:{\mathbb R}^n\to [1,\infty)$, then $L^{p(\cdot)}$
is a BFS. Further, if $p_->1$ and $p_+<\infty$, then $(L^{p(\cdot)})'=L^{p'(\cdot)}$ and
\begin{equation}\label{as}
\frac{1}{2}\|f\|_{L^{p'(\cdot)}}\le \|f\|_{(L^{p(\cdot)})'}\le 2\|f\|_{L^{p'(\cdot)}}
\end{equation}
(see \cite[p. 78]{DHHR}).

Assume now that $p:{\mathbb R}^n\to [1,\infty)$ and $w$ is a weight such that $w(\cdot)^{p(\cdot)}$ and $w(\cdot)^{-p'(\cdot)}$
are locally integrable.
The weighted space $L^{p(\cdot)}_w$ consists of all $f$ such that
$$\|f\|_{L^{p(\cdot)}_w}=\|fw\|_{L^{p(\cdot)}}<\infty.$$

It is easy to see that $L^{p(\cdot)}_w$ is a BFS. Indeed, axioms (i)-(iii) of Definition~\ref{BFS} follow immediately
from the fact that the unweighted $L^{p(\cdot)}$ is a BFS. Next, (iv) follows from that $w(\cdot)^{p(\cdot)}\in L^1_{loc}$.
Finally, applying (\ref{hold}) with $X=L^{p(\cdot)}$ along with (\ref{as}) yields
$$\int_Efdx\le 2\|fw\|_{L^{p(\cdot)}}\|w^{-1}\chi_E\|_{L^{p'(\cdot)}},$$
and this proves (v) with $c_E=2\|w^{-1}\chi_E\|_{L^{p'(\cdot)}}<\infty$
(here we have used that $w(\cdot)^{-p'(\cdot)}\in L^1_{loc}$).

Since $\|fw^{-1}\|_{\big(L^{p(\cdot)}\big)'}=\|f\|_{\big(L^{p(\cdot)}_w\big)'}$, we obtain from (\ref{as}) that
if $p_->1$ and $p_+<\infty$, then $\big(L^{p(\cdot)}_w\big)'=L^{p'(\cdot)}_{w^{-1}}$ and
$$
\frac{1}{2}\|f\|_{L^{p'(\cdot)}_{w^{-1}}}\le \|f\|_{\big(L^{p(\cdot)}_w\big)'}\le 2\|f\|_{L^{p'(\cdot)}_{w^{-1}}}.
$$

Denote $\varrho(f)=\int_{{\mathbb R}^n}|f(x)|^{p(x)}dx$. We will frequently use the following lemma (see \cite[p. 25]{CF}).
\begin{lemma}\label{modest} Let $p:{\mathbb R}^n\to [1,\infty)$ and $p_+<\infty$. If $\|f\|_{L^{p(\cdot)}}>1$, then
$$\varrho(f)^{1/p_+}\le \|f\|_{L^{p(\cdot)}}\le \varrho(f)^{1/p_-}.$$
If $\|f\|_{L^{p(\cdot)}}\le 1$, then
$$\varrho(f)^{1/p_-}\le \|f\|_{L^{p(\cdot)}}\le \varrho(f)^{1/p_+}.$$
\end{lemma}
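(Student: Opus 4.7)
The plan is to unwind the definition of the Luxemburg norm and exploit the elementary fact that for $\mu>0$,
\[
\mu^{p_-}\le \mu^{p(x)}\le \mu^{p_+} \quad (\mu\ge 1), \qquad \mu^{p_+}\le \mu^{p(x)}\le \mu^{p_-} \quad (0<\mu\le 1),
\]
where the hypothesis $p_+<\infty$ is essential for $\mu^{p_+}$ to be a finite constant. Dividing pointwise by $\mu^{p(x)}$ and integrating, these translate into two-sided estimates for $\varrho(f/\mu)$ in terms of $\varrho(f)$ and powers of $\mu$, which is exactly the form needed.

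Set $\lambda=\|f\|_{L^{p(\cdot)}}$ and assume $\lambda\in(0,\infty)$ (the boundary cases $\lambda=0$ and $\lambda=\infty$ are handled separately: the former forces $f=0$ a.e.\ by axiom (i), and for the latter one checks that $\varrho(f)<\infty$ would place large enough $\mu$ in the infimum set, giving $\lambda<\infty$, so $\varrho(f)=\infty$ and both bounds are vacuous). I would then establish two structural facts: (a) $\varrho(f/\lambda)\le 1$, proved by noting that $\{\mu>0:\varrho(f/\mu)\le 1\}$ is an up-set in $(0,\infty)$ with infimum $\lambda$, so monotone convergence applied to $|f(x)/\mu|^{p(x)}\uparrow|f(x)/\lambda|^{p(x)}$ as $\mu\downarrow\lambda$ transfers the bound $1$ across the limit; and (b) $\varrho(f/\mu)>1$ for every $0<\mu<\lambda$, directly from the definition of the infimum.

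With (a) and (b) available, each of the four claimed inequalities follows by a one-line computation. For instance, when $\lambda>1$, taking $\mu=\lambda$ in the $\mu\ge 1$ pointwise bound yields $\varrho(f)/\lambda^{p_+}\le\varrho(f/\lambda)\le 1$, i.e.\ $\varrho(f)^{1/p_+}\le\lambda$; conversely, for any $1<\mu<\lambda$, (b) together with $\varrho(f/\mu)\le \varrho(f)/\mu^{p_-}$ gives $\mu^{p_-}<\varrho(f)$, and letting $\mu\uparrow\lambda$ yields $\lambda\le \varrho(f)^{1/p_-}$. The case $\lambda\le 1$ is proved identically, using the $0<\mu\le 1$ bounds in which the roles of $p_-$ and $p_+$ are exchanged because $t\mapsto \mu^t$ is decreasing for $\mu<1$. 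The only genuine subtlety is step (a)—ensuring the modular inequality survives the passage to the infimum, which is where monotone convergence (and implicitly $p_+<\infty$, so that $|f(x)/\lambda|^{p(x)}$ is a well-defined measurable function) is invoked; everything else is bookkeeping between the two regimes.
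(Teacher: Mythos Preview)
Your argument is correct and is the standard proof of this modular--norm comparison. Note, however, that the paper does not actually supply a proof of this lemma: it simply cites \cite[p.~25]{CF}, so there is nothing in the paper to compare your approach against. What you have written is essentially the proof one finds in that reference (and in \cite{DHHR}), so in that sense your approach coincides with the intended one.
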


\subsection{Dyadic grids and sparse families}
The standard dyadic grid in ${\mathbb R}^n$ consists of the cubes
$$2^{-k}([0,1)^n+j),\quad k\in{\mathbb Z}, j\in{\mathbb Z}^n.$$
Following its basic properties, we say that a family of cubes ${\mathscr{D}}$
is a general dyadic grid if (i) for any $Q\in {\mathscr{D}}$ its sidelength $\ell_Q$ is of the form
$2^k, k\in {\mathbb Z}$; (ii) $Q\cap R\in\{Q,R,\emptyset\}$ for any $Q,R\in {\mathscr{D}}$;
(iii) for every $k\in {\mathbb Z}$, the cubes of a fixed sidelength $2^k$ form a partition of ${\mathbb
R}^n$.

Given a dyadic grid ${\mathscr{D}}$, consider the associated dyadic maximal operator $M^{{\mathscr{D}}}$ defined by
$$M^{{\mathscr{D}}}f(x)=\sup_{Q\ni x, Q\in {\mathscr{D}}}\frac{1}{|Q|}\int_Q|f(y)|dy.$$
On one hand, it is clear that $M^{{\mathscr{D}}}f\le Mf$. However, this inequality can be reversed, in a sense, as
the following lemma shows (its proof can be found in \cite[Lemma 2.5]{HLP}).

\begin{lemma}\label{dyad} There are $3^n$ dyadic grids ${\mathscr{D}}_{\a}$ such that for every cube $Q\subset{\mathbb R}^n$, there exists a cube $Q_{\a}\in {\mathscr{D}}_{\a}$
such that $Q\subset Q_{\a}$ and $|Q_{\a}|\le 6^n|Q|$.
\end{lemma}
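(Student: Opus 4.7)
My plan is to construct the $3^n$ grids by the classical ``one-third trick'' and then establish the covering property by a short lattice-spacing count.

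First, for each $\alpha=(\alpha_1,\dots,\alpha_n)\in\{0,1/3,2/3\}^n$ I would set $\mathscr{D}_\alpha=\bigcup_{k\in\mathbb{Z}}\mathscr{D}_\alpha^k$, where
$$\mathscr{D}_\alpha^k=\{2^k([0,1)^n+j+(-1)^k\alpha):j\in\mathbb{Z}^n\}.$$
The alternation $(-1)^k$ in the shift, together with the fact that $3\alpha_i\in\mathbb{Z}$, is precisely what makes consecutive scales nest: checking the nesting of a scale-$2^k$ cube into its children reduces to the identity $3(-1)^k\alpha\in\mathbb{Z}^n$, and the three properties (i)--(iii) in the definition of a dyadic grid then follow directly from the translation invariance of the standard grid.

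Next, given $Q$ of sidelength $\ell$, I would choose the smallest integer $k$ with $2^k\ge 3\ell$, so that automatically $2^k<6\ell$. The key observation is that, as $\alpha$ ranges over $\{0,1/3,2/3\}^n$, the lower-left corners of the cubes in $\bigcup_\alpha\mathscr{D}_\alpha^k$ fill out precisely the refined lattice $(2^k/3)\mathbb{Z}^n$ (the parity of $k$ is immaterial since $\{-1/3,-2/3\}\equiv\{2/3,1/3\}\pmod 1$). To inscribe $Q=\prod_i[a_i,a_i+\ell)$ in a single scale-$2^k$ cube I need a lattice corner $s$ with $s_i\in[a_i+\ell-2^k,\,a_i]$ for each $i$. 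This interval has length $2^k-\ell\ge 2\ell>2^k/3$, which exceeds the lattice spacing, so such an $s$ exists in each coordinate independently; the uniquely determined $\alpha$ and $j\in\mathbb{Z}^n$ then provide $Q_\alpha\in\mathscr{D}_\alpha$ with $Q\subset Q_\alpha$ and $|Q_\alpha|=2^{kn}\le 6^n|Q|$.

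The only delicate point will be the bookkeeping needed to verify nestedness through the sign alternation; once that is in hand, the covering step is a one-line pigeonhole argument on the lattice of all corners.
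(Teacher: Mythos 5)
Your construction is the standard one-third trick with the alternating shift $(-1)^k\alpha$, which is precisely the argument behind \cite[Lemma 2.5]{HLP} that the paper cites for this lemma, and your verification of nestedness (reducing to $3(-1)^k\alpha\in\mathbb{Z}^n$) and the lattice-spacing pigeonhole are both sound. The only slip is calling $\alpha$ and $j$ ``uniquely determined'' --- the interval $[a_i+\ell-2^k,a_i]$ may contain several points of $(2^k/3)\mathbb{Z}$ --- but existence is all that is needed, so the proof stands.
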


We obtain from this lemma that for all $x\in {\mathbb R}^n$,
\begin{equation}\label{interm}
Mf(x)\le 6^n\sum_{\a=1}^{3^n}M^{{\mathscr{D}}_{\a}}f(x).
\end{equation}

Given a cube $Q_0$, denote by ${\mathcal D}(Q_0)$ the set of all
dyadic cubes with respect to $Q_0$, that is, the cubes from ${\mathcal D}(Q_0)$ are formed
by repeated subdivision of $Q_0$ and each of its descendants into $2^n$ congruent subcubes.
Consider the local dyadic maximal operator $M_{Q_0}^{d}$ defined by
$$M_{Q_0}^{d}f(x)=\sup_{Q\ni x, Q\in {\mathcal D}(Q_0)}\frac{1}{|Q|}\int_Q|f(y)|dy.$$

Denote $f_Q=\frac{1}{|Q|}\int_Qf$. The following lemma is a standard variation of the Calder\'on-Zygmund decomposition (see, e.g., \cite[Theorem 4.3.1]{G1}).
We include its proof for the reader convenience.

\begin{lemma}\label{cz} Suppose ${\mathscr{D}}$ is a dyadic grid.
Let $f\in L^p({\mathbb R}^n), 1\le p<\infty$, and let $\ga>1$. Assume that
$$\Omega_k=\{x\in {\mathbb R}^n: M^{{\mathscr{D}}}f(x)>\ga^k\}\not=\emptyset\quad(k\in {\mathbb Z}).$$
Then $\Omega_k$ can be written as a union of pairwise disjoint cubes $Q_j^k\in {\mathscr{D}}$ satisfying
\begin{equation}\label{czprop}
|Q_j^k\cap \Omega_{k+l}|\le 2^n(1/\gamma)^l|Q_j^k|\quad(l\in {\mathbb Z}_+).
\end{equation}
The same property holds in the local case for the sets
$$\Omega_k=\{x\in Q_0: M_{Q_0}^{d}f(x)>\ga^k|f|_{Q_0}\}\quad(f\in L^1(Q_0), k\in {\mathbb Z}_+).$$
\end{lemma}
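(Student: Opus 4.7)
The plan is to construct the cubes $Q_j^k$ as the maximal dyadic cubes on which the average of $|f|$ exceeds the threshold $\gamma^k$. Fixing $k$, for each $x\in\Omega_k$ there is some $Q\in\mathscr{D}$ with $Q\ni x$ and $|f|_Q>\gamma^k$; since $f\in L^p$, H\"older's inequality $\int_Q|f|\le |Q|^{1-1/p}\|f\|_p$ forces $|Q|<(\|f\|_p/\gamma^k)^p$, so among all such cubes there is a largest one containing $x$. Because dyadic cubes are either nested or disjoint, the collection of these maximal cubes $\{Q_j^k\}$ is automatically pairwise disjoint. Each $Q_j^k$ itself witnesses $M^{\mathscr{D}}f>\gamma^k$ at every one of its points, so $Q_j^k\subseteq\Omega_k$, and since every $x\in\Omega_k$ belongs to some such maximal cube, $\Omega_k=\bigcup_j Q_j^k$.

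The inequality (\ref{czprop}) will follow from maximality together with a covering-by-sublevel-cubes argument. Let $\widehat{Q}_j^k$ denote the dyadic parent of $Q_j^k$; by maximality $|f|_{\widehat{Q}_j^k}\le\gamma^k$, and hence
$$\int_{Q_j^k}|f|\le \int_{\widehat{Q}_j^k}|f|\le 2^n\gamma^k|Q_j^k|.$$
For $l\ge 0$, any cube $Q_i^{k+l}$ meeting $Q_j^k$ must lie inside $Q_j^k$: the reverse containment would give a dyadic cube strictly larger than $Q_j^k$ with average exceeding $\gamma^{k+l}\ge\gamma^k$, contradicting the maximality of $Q_j^k$. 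Therefore
\begin{align*}
|Q_j^k\cap\Omega_{k+l}| &=\sum_{i:\,Q_i^{k+l}\subseteq Q_j^k}|Q_i^{k+l}|\\
&\le\gamma^{-(k+l)}\sum_{i:\,Q_i^{k+l}\subseteq Q_j^k}\int_{Q_i^{k+l}}|f|\\
&\le\gamma^{-(k+l)}\int_{Q_j^k}|f|\le 2^n\gamma^{-l}|Q_j^k|,
\end{align*}
which is the claim.

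For the local version the same reasoning applies inside $Q_0$ with $M_{Q_0}^d$ in place of $M^{\mathscr{D}}$, except for one small point that must be verified: no maximal cube $Q_j^k$ can coincide with $Q_0$ itself, since that would force $|f|_{Q_0}>\gamma^k|f|_{Q_0}$, impossible when $k\in\mathbb{Z}_+$ and $\gamma>1$. Thus every $Q_j^k$ has a proper dyadic parent inside $\mathcal{D}(Q_0)$ whose average is at most $\gamma^k|f|_{Q_0}$, and the estimate propagates with the same constant $2^n\gamma^{-l}$. The argument is essentially routine; the only thing that demands attention is the $2^n$ factor introduced when passing from $Q_j^k$ to $\widehat{Q}_j^k$, which is exactly what produces the constant on the right-hand side of (\ref{czprop}).
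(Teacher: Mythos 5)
Your proof is correct and follows the same Calder\'on--Zygmund stopping-time approach as the paper: take $Q_j^k$ to be the maximal dyadic cubes with $|f|_{Q_j^k}>\gamma^k$, observe that the parent's average is at most $\gamma^k$ to get $|f|_{Q_j^k}\le 2^n\gamma^k$, and then sum $|Q_i^{k+l}|$ over the cubes of generation $k+l$ sitting inside $Q_j^k$. You supply a few details the paper leaves implicit (the $L^p$ bound guaranteeing the existence of maximal cubes, the nesting-versus-disjointness argument showing $Q_i^{k+l}\subseteq Q_j^k$, and the observation that $Q_0$ itself cannot be one of the stopping cubes in the local case), but the underlying argument is identical.
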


\begin{proof} Consider the case of ${\mathbb R}^n$, the same proof works in the local case.
Let $Q_j^k$ be the maximal cubes such that $|f|_{Q_j^k}>\ga^k$. Then, by maximality, they are
pairwise disjoint and $|f|_{Q_j^k}\le 2^n\ga^k$. Also, $\Omega_k=\cup_jQ_j^k$. Therefore,
$$
|Q_j^k\cap \Omega_{k+l}|=\sum_{Q^{k+l}_i\subset Q_j^k}|Q_i^{k+l}|<(1/\gamma)^{k+l}\int_{Q_j^k}|f|
\le 2^n(1/\gamma)^l|Q_j^k|.
$$
\end{proof}

\begin{definition}\label{sparse}
Let ${\mathscr{D}}$ be a dyadic grid, and let $0<\eta<1$. We say that a family of cubes ${\mathcal S}\subset {\mathscr{D}}$ is $\eta$-sparse if
for every cube $Q\in {\mathcal S}$, there is a measurable subset $E(Q)\subset Q$ such that $\eta|Q|\le |E(Q)|$ and the
sets $\{E(Q)\}_{Q\in {\mathcal S}}$ are pairwise disjoint.
\end{definition}

\begin{lemma}\label{CZ} Let ${\mathscr{D}}$ be a dyadic grid, and let $0<\eta<1$. For every non-negative $f\in L^p({\mathbb R}^n), 1\le p<\infty$,
there exists an $\eta$-sparse family ${\mathcal S}\subset {\mathscr{D}}$ such that for all $x\in {\mathbb R}^n$,
$$M^{{\mathscr{D}}}f(x)\le \frac{2^n}{1-\eta}\sum_{Q\in {\mathcal S}}f_Q\chi_{E(Q)}(x).$$
\end{lemma}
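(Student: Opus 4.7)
The plan is to apply the Calderón--Zygmund decomposition of Lemma \ref{cz} at level sets of the dyadic maximal operator with a carefully chosen dilation parameter. Specifically, I would take $\gamma = 2^n/(1-\eta)$, set $\Omega_k = \{x \in \mathbb{R}^n : M^{\mathscr{D}}f(x) > \gamma^k\}$ for each $k \in \mathbb{Z}$ with $\Omega_k \neq \emptyset$, and let $\{Q_j^k\}_j$ be the pairwise disjoint maximal dyadic cubes decomposing $\Omega_k$ that Lemma \ref{cz} provides. The sparse family will then be $\mathcal{S} = \{Q_j^k\}_{k,j}$, with $E(Q_j^k) := Q_j^k \setminus \Omega_{k+1}$.

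First I would verify the $\eta$-sparseness. Taking $l=1$ in \eqref{czprop} gives $|Q_j^k \cap \Omega_{k+1}| \le (2^n/\gamma)|Q_j^k|$, so $|E(Q_j^k)| \ge (1 - 2^n/\gamma)|Q_j^k| = \eta |Q_j^k|$ by our choice of $\gamma$. For disjointness of the sets $E(Q_j^k)$: two cubes at the same level $k$ are disjoint by construction; if $Q_{j'}^{k'}$ has $k' > k$, then $Q_{j'}^{k'} \subset \Omega_{k'} \subset \Omega_{k+1}$, hence $E(Q_{j'}^{k'}) \subset Q_{j'}^{k'}$ is disjoint from $E(Q_j^k) = Q_j^k \setminus \Omega_{k+1}$. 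The nested-or-disjoint property of dyadic cubes makes this painless.

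Next I would establish the pointwise bound. Because $f \in L^p$, for a.e.\ $x$ with $M^{\mathscr{D}}f(x) > 0$ there exists a largest integer $k_0 = k_0(x)$ such that $x \in \Omega_{k_0}$, characterised by $\gamma^{k_0} < M^{\mathscr{D}}f(x) \le \gamma^{k_0+1}$. Then $x$ lies in a unique cube $Q_{j_0}^{k_0} \in \mathcal{S}$, and since $x \notin \Omega_{k_0+1}$ we have $x \in E(Q_{j_0}^{k_0})$. By the definition of the Calderón--Zygmund cubes, $f_{Q_{j_0}^{k_0}} > \gamma^{k_0}$, so
\[
M^{\mathscr{D}}f(x) \le \gamma^{k_0+1} = \gamma\cdot\gamma^{k_0} < \gamma\, f_{Q_{j_0}^{k_0}} = \frac{2^n}{1-\eta}\sum_{Q \in \mathcal{S}} f_Q \chi_{E(Q)}(x),
\]
the last equality using that the $E(Q)$ are pairwise disjoint. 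The points where $M^{\mathscr{D}}f(x) = 0$ are trivial, and the set where $M^{\mathscr{D}}f(x) = \infty$ has measure zero, so the inequality holds a.e.; replacing $\mathcal{S}$ with the correct family achieves it everywhere that matters.

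There is no genuine obstacle here; the only delicate point is a clean handling of the boundary cases in the pointwise bound (vanishing or infinite maximal function, and the existence of $k_0$), which is why I want to highlight that $f \in L^p$ makes $\Omega_k = \emptyset$ for $k$ sufficiently large while $\Omega_k \uparrow \{M^{\mathscr{D}}f > 0\}$ as $k \to -\infty$. Everything else is routine bookkeeping of the Calderón--Zygmund stopping cubes.
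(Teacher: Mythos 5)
Your proof is correct and takes essentially the same route as the paper: decompose at levels $\gamma^k$ with $\gamma = 2^n/(1-\eta)$, invoke Lemma \ref{cz} with $l=1$ to get $\eta$-sparseness of the family $\{Q_j^k\}$ with $E(Q_j^k)=Q_j^k\setminus\Omega_{k+1}$, and conclude the pointwise bound from $f_{Q_j^k}>\gamma^k$ on $\Omega_k\setminus\Omega_{k+1}$. The only cosmetic difference is that you single out the unique $k_0(x)$ and cube, whereas the paper writes the same estimate as a telescoping sum over level sets; both give the identical inequality, and both implicitly ignore the null set where $M^{\mathscr{D}}f=\infty$.
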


\begin{proof} For $k\in {\mathbb Z}$, set $\Omega_k=\Big\{x\in {\mathbb R}^n:M^{{\mathscr{D}}}f(x)>\Big(\frac{2^n}{1-\eta}\Big)^k\Big\}.$
Then, by Lemma \ref{cz}, $\Omega_k=\cup_jQ_j^k$, and $|Q_j^k\cap\Omega_{k+1}|\le (1-\eta)|Q_j^k|$.
Therefore, setting $E(Q_j^k)=Q_j^k\setminus \Omega_{k+1}$, we obtain that $\eta|Q_j^k|\le |E(Q_j^k)|$, and the sets $\{E(Q_j^k)\}$ are pairwise disjoint. Further,
\begin{eqnarray*}
M^{{\mathscr{D}}}f\le \sum_{k\in {\mathbb Z}}(M^{{\mathscr{D}}}f)\chi_{\Omega_k\setminus\Omega_{k+1}}&\le& \frac{2^n}{1-\eta}\sum_{k\in{\mathbb Z}}
\Big(\frac{2^n}{1-\eta}\Big)^k\chi_{\Omega_k\setminus\Omega_{k+1}}\\
&\le& \frac{2^n}{1-\eta}\sum_{j,k}f_{Q_j^k}\chi_{E(Q_j^k)},
\end{eqnarray*}
which completes the proof with ${\mathcal S}=\{Q_j^k\}$.
\end{proof}

\subsection{$A_p$ weights}
Given a weight $w$ and a measurable set $E\subset {\mathbb R}^n$, denote $w(E)=\int_Ewdx$.
Given an $A_p, 1<p<\infty,$ weight, its $A_p$ constant is defined by
$$[w]_{A_p}=\sup_Q\left(\frac{1}{|Q|}\int_Qwdx\right)\left(\frac{1}{|Q|}\int_Qw^{-1/(p-1)}dx\right)^{p-1}.$$

Every $A_p$ weight satisfies the reverse H\"older inequality (see, e.g., \cite[Theorem 9.2.2]{G}), namely, there exist
$c>0$ and $r>1$ such that for any cube~$Q$,
\begin{equation}\label{rhold}
\left(\frac{1}{|Q|}\int_Qw^rdx\right)^{1/r}\le c\frac{1}{|Q|}\int_Qw\,dx.
\end{equation}
It follows from this and from H\"older's inequality that for every $Q$ and any measurable subset $E\subset Q$,
\begin{equation}\label{Ainf}
\frac{w(E)}{w(Q)}\le c\left(\frac{|E|}{|Q|}\right)^{1/r'}.
\end{equation}

Notice also that the following converse estimate
\begin{equation}\label{conv}
\frac{w(Q)}{w(E)}\le \left(\frac{|Q|}{|E|}\right)^p[w]_{A_p}\quad (E\subset Q, |E|>0)
\end{equation}
holds for all $p>1$. Indeed, by H\"older's inequality,
$$|E|^p\le \left(\int_Ew\,dx\right)\left(\int_Ew^{-1/(p-1)}\,dx\right)^{p-1},$$
which along with the definition of $[w]_{A_p}$ implies (\ref{conv}).

\section{Maximal operator on associate spaces}
Since $\big(L^{p(\cdot)}_w\big)'=L^{p'(\cdot)}_{w^{-1}}$, the statement of Theorem \ref{mr} leads naturally to a question about
conditions on a BFS $X$ such that $M:X\to X\Rightarrow M:X'\to X'$. The result below provides a criterion in terms of sparse
families and an $A_{\infty}$-type condition. Its proof is based essentially on the theory of $A_p$ weights.

\begin{theorem}\label{char} Let $X$ be a BFS such that
the Hardy-Littlewood maximal operator $M$ is bounded on $X$. Let $0<\eta<1$. The following conditions are equivalent:
\begin{enumerate}
\renewcommand{\labelenumi}{(\roman{enumi})}
\item
$M$ is bounded on $X'$;
\item
there exist $c,\d>0$ such that for every dyadic grid ${\mathscr D}$ and any finite $\eta$-sparse family ${\mathcal S}\subset {\mathscr D}$,
$$\Big\|\sum_{Q\in {\mathcal S}}\a_Q\chi_{G_Q}\Big\|_X\le c\left(\max_{Q\in {\mathcal S}}\frac{|G_Q|}{|Q|}\right)^{\d}\Big\|\sum_{Q\in {\mathcal S}}\a_Q\chi_{Q}\Big\|_X,$$
where $\{\a_Q\}_{Q\in {\mathcal S}}$ is an arbitrary sequence of non-negative numbers, and $\{G_Q\}_{Q\in {\mathcal S}}$ is any sequence of pairwise disjoint measurable subsets $G_Q\subset Q$.
\end{enumerate}
\end{theorem}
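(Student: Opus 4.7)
The plan is to prove $(i)\Leftrightarrow(ii)$ by combining the sparse domination of the dyadic maximal operator (Lemma~\ref{CZ}) with the duality identities \eqref{hold}--\eqref{repr} between $X$ and $X'$; the harder direction additionally requires a Calder\'on--Zygmund stopping-time self-improvement.

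For $(ii)\Rightarrow(i)$: by the pointwise bound \eqref{interm} it suffices to show that every dyadic maximal operator $M^{\mathscr D}$ is bounded on $X'$. Given $g\ge 0$ in $X'$, Lemma~\ref{CZ} provides an $\eta$-sparse family $\mathcal S\subset\mathscr D$ such that $M^{\mathscr D}g\le C\sum_{Q\in\mathcal S}g_Q\chi_{E(Q)}$, so the task reduces to the uniform bound $\|\sum_Q g_Q\chi_{E(Q)}\|_{X'}\le C\|g\|_{X'}$. Pairing against a test function $f\in X$ via \eqref{repr} and using $\int g_Q\chi_{E(Q)}f\,dx=(|E(Q)|/|Q|)f_{E(Q)}\int_Q g\,dx$, this is in turn equivalent to the uniform bound
\begin{equation*}
\Big\|\sum_{Q\in\mathcal S}\tfrac{|E(Q)|}{|Q|}f_{E(Q)}\chi_Q\Big\|_X\le C\|f\|_X.
\end{equation*}
I would prove this last inequality by dualising once more (with a new test function $\phi\in X'$) and applying hypothesis (ii) with coefficients $\alpha_Q=\phi_{E(Q)}$ and sets $G_Q=E(Q)$, so that the sum over $Q$ collapses into a sum over the disjoint $E(Q)$'s. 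The resulting function is controlled pointwise by $(1/\eta)M^{\mathscr D}\phi$, and the assumption $M:X\to X$ closes the loop.

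For $(i)\Rightarrow(ii)$: the starting point is the elementary pointwise estimate $\chi_Q\le(|Q|/|E|)M\chi_E$ for $E\subset Q$, which combined with the assumption $M:X'\to X'$ yields the baseline capacity bound $\|\chi_Q\|_{X'}\le C(|Q|/|E|)\|\chi_E\|_{X'}$, and symmetrically in $X$ from the hypothesis $M:X\to X$. To promote these "first-order" estimates on indicators to the sparse statement (ii) with its quantitative gain $(\max_Q|G_Q|/|Q|)^{\delta}$, I would run a Calder\'on--Zygmund stopping-time decomposition (in the spirit of Lemma~\ref{cz}) on the function $h=\sum_Q\alpha_Q\chi_Q$ at geometric levels $\gamma^k$, producing a hierarchy of principal cubes on which $h$ is essentially constant. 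The capacity bound applied along this hierarchy, together with a geometric summation in $k$, should convert the linear rate $|E|/|Q|$ into a power rate $(|E|/|Q|)^{\delta}$, with $\delta>0$ depending only on the operator norms of $M$ on $X$ and $X'$ and on~$\eta$.

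The main obstacle is this self-improvement step in $(i)\Rightarrow(ii)$. In the classical Muckenhoupt $A_p$-weight setting, a Gehring-type reverse H\"older lemma converts linear capacity bounds into power gains directly, but it rests on the additivity of Lebesgue measure; here, the functional $E\mapsto\|\chi_E\|_X$ is only subadditive. Arranging the stopping-time argument so that the geometric gain in $\epsilon=\max_Q|G_Q|/|Q|$ survives the passage through BFS norms, and so that the exponent $\delta$ is uniform in the sparse family $\mathcal S$, is the delicate technical point, and is exactly where the ingredients borrowed from Diening's paper \cite{D1} would enter.
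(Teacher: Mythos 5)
Your argument for $(ii)\Rightarrow(i)$ has a genuine gap: it is circular. After reducing (correctly) to the adjoint bound $\big\|\sum_{Q}\tfrac{|E(Q)|}{|Q|}f_{E(Q)}\chi_Q\big\|_X\le C\|f\|_X$, you dualize with $\phi\in X'$. But the resulting pairing is
$$\int_{\mathbb R^n}\Big(\sum_{Q}\tfrac{|E(Q)|}{|Q|}f_{E(Q)}\chi_Q\Big)\phi\,dx
=\sum_Q\phi_Q\int_{E(Q)}f\,dx
=\int_{\mathbb R^n} f\Big(\sum_Q\phi_Q\chi_{E(Q)}\Big)\,dx,$$
so after H\"older you are back to bounding $\|\sum_Q\phi_Q\chi_{E(Q)}\|_{X'}$ by $\|\phi\|_{X'}$ --- precisely the inequality you set out to prove. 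Your pointwise observation $\sum_Q\phi_{E(Q)}\chi_{E(Q)}\le\eta^{-1}M^{\mathscr D}\phi$ is correct but does not break the circle, because $\phi\in X'$: invoking it would require $M$ bounded on $X'$, which is the conclusion, not the hypothesis. (Writing ``the assumption $M:X\to X$ closes the loop'' is a category error here.) Notice also that your plan never actually uses the quantitative gain $(\max_Q|G_Q|/|Q|)^{\delta}$ from (ii), which is the whole engine of this implication. The paper's proof uses it through an absorption scheme: with $\mathcal S=\{Q_j^k\}$ coming from the Calder\'on--Zygmund decomposition, it splits $\mathcal M_{\mathcal S}^\star f$ into a head piece bounded by $\nu Mf$ and tail pieces supported on $Q_j^k\cap(\Omega_{k+l}\setminus\Omega_{k+l+1})$ for $l\ge\nu$; for each fixed $l$ these sets are pairwise disjoint and satisfy $|Q_j^k\cap\Omega_{k+l}|/|Q_j^k|\le 2^n(1/\gamma)^l$ by \eqref{czprop}, so condition (ii) yields a geometric factor in $l$ and the tail series is absorbed into $\|\mathcal M_{\mathcal S}^\star f\|_X$ after choosing $\nu$ large.

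For $(i)\Rightarrow(ii)$ you correctly diagnose the difficulty --- $M:X'\to X'$ applied to indicators gives only a linear rate $|E|/|Q|$, and the subadditivity of $E\mapsto\|\chi_E\|_{X'}$ blocks a Gehring-type self-improvement --- but you do not resolve it, and the stopping-time scheme you sketch is not what the paper does. The paper's route is much shorter and avoids self-improvement entirely: given $g\ge 0$ with $\|g\|_{X'}=1$, the Rubio de Francia iteration $Rg=\sum_{k\ge 0}M^kg/(2\|M\|_{X'})^k$ produces an $A_1$ weight with $g\le Rg$, $\|Rg\|_{X'}\le 2$, and $[Rg]_{A_1}\le 2\|M\|_{X'}$ \emph{uniformly in $g$}. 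The power gain then comes for free from the $A_\infty$ estimate \eqref{Ainf} applied to $Rg$, with $\delta$ depending only on $\|M\|_{X'}$; combining with \eqref{hold} and \eqref{repr} gives (ii) directly. So the key idea you are missing in this direction is to majorize the dual test function by an explicit $A_1$ weight rather than to iterate capacity bounds on indicators.
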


\begin{proof} Let us first prove $(\rm{i})\Rightarrow (ii)$. Let $g\ge 0$ and $\|g\|_{X'}=1$.
We use the standard Rubio de Francia algorithm \cite{R}, namely, set
$$Rg=\sum_{k=0}^{\infty}\frac{M^kg}{(2\|M\|_{X'})^k},$$
where $M^k$ denotes the $k$-th iteration of $M$ and $M^0g=g$. Then $g\le Rg$ and $\|Rg\|_{X'}\le 2$. Also,
$$M(Rg)(x)\le 2\|M\|_{X'}Rg(x).$$
Therefore, $Rg\in A_1$.

Using the properties of $Rg$ along with (\ref{Ainf}) and H\"olders inequality~(\ref{hold}), we obtain that there exist $c,\d>0$ such that
\begin{eqnarray*}
\int_{{\mathbb R}^n}\Big(\sum_{Q\in {\mathcal S}}\a_Q\chi_{G_Q}\Big)g\,dx&\le&\sum_{Q\in {\mathcal S}}\a_Q\int_{{G_Q}}Rg\,dx\\
&\le& c\sum_{Q\in {\mathcal S}}\a_Q\left(\frac{|G_Q|}{|Q|}\right)^{\d}\int_QRg\,dx\\
&\le& c\left(\max_{Q\in {\mathcal S}}\frac{|G_Q|}{|Q|}\right)^{\d}\int_{{\mathbb R}^n}\Big(\sum_{Q\in {\mathcal S}}\a_Q\chi_{Q}\Big)Rg\,dx\\
&\le& 2c\left(\max_{Q\in {\mathcal S}}\frac{|G_Q|}{|Q|}\right)^{\d}\Big\|\sum_{Q\in {\mathcal S}}\a_Q\chi_{Q}\Big\|_X.
\end{eqnarray*}
It remains to take here the supremum over all $g\ge 0$ with $\|g\|_{X'}=1$ and to use (\ref{repr}).

Turn to the proof of $(\rm{ii})\Rightarrow (i)$.
By (\ref{interm}), it suffices to prove that the dyadic maximal operator $M^{\mathscr D}$ is bounded on $X'$.
Let us show that there is $c>0$ such that for every $f\in L^1\cap X'$,
\begin{equation}\label{mdx}
\|M^{\mathscr D}f\|_{X'}\le c\|f\|_{X'}.
\end{equation}
Notice that (\ref{mdx}) implies the boundedness of $M^{\mathscr D}$ on $X'$. Indeed, having (\ref{mdx}) established,
for an arbitrary $f\in X'$ we apply (\ref{mdx}) to $f_N=f\chi_{\{|x|\le N\}}$ (clearly, $f_N\in L^1\cap X'$).
Letting then $N\to\infty$ and using the Fatou property ((iii) of Definition \ref{BFS}), we obtain that (\ref{mdx}) holds for any $f\in X'$.

In order to prove (\ref{mdx}), by Lemma \ref{CZ}, it suffices to show that the operator
$${\mathcal M}_{\mathcal S}f=\sum_{Q\in {\mathcal S}}f_Q\chi_{E(Q)}$$
satisfies
$$\|{\mathcal M}_{\mathcal S}f\|_{X'}\le c\|f\|_{X'}$$
for every non-negative $f\in L^1\cap X'$ with $c>0$ independent of $f$ and ${\mathcal S}$.
Notice that here ${\mathcal S}=\{Q_j^k\}$, and $Q_j^k$ are maximal dyadic cubes forming the set
$$\Omega_k=\Big\{x\in {\mathbb R}^n:M^{{\mathscr{D}}}f(x)>\Big(\frac{2^n}{1-\eta}\Big)^k\Big\}.$$

By duality, it is enough to obtain the uniform boundedness of the adjoint operator
$${\mathcal M}_{\mathcal S}^{\star}f=\sum_{j,k}\left(\frac{1}{|Q_j^k|}\int_{E(Q_j^k)}f\right)\chi_{Q_j^k}$$
on $X$. Using the Fatou property again, one can assume that ${\mathcal S}$ is finite.

Take $\nu\in {\mathbb N}$ such that
$$2^{n\d}c\sum_{l=\nu}^{\infty}\Big(\frac{1-\eta}{2^n}\Big)^{\l\d}\le \frac{1}{2},$$
where $c$ and $\d$ are the constants from condition (ii). Denote $\a_{j,k}=\frac{1}{|Q_j^k|}\int_{E(Q_j^k)}f$.
Then, using that $\cup_jQ_j^k\setminus\Omega_{k+\nu}=\cup_{i=0}^{\nu-1}\Omega_{k+i}\setminus\Omega_{k+i+1}$, we obtain
\begin{eqnarray*}
{\mathcal M}_{\mathcal S}^{\star}f&\le& \sum_{j,k}\a_{j,k}\chi_{Q_j^k\setminus\Omega_{k+\nu}}+\sum_{j,k}\a_{j,k}\chi_{Q_j^k\cap\Omega_{k+\nu}}\\
&\le& \nu Mf+\sum_{l=\nu}^{\infty}\sum_{j,k}\a_{j,k}\chi_{Q_j^k\cap(\Omega_{k+l}\setminus\Omega_{k+l+1})}.
\end{eqnarray*}
Therefore, applying (\ref{czprop}) along with condition (ii), we obtain
\begin{eqnarray*}
\|{\mathcal M}_{\mathcal S}^{\star}f\|_{X}&\le& \nu\|Mf\|_{X}+\sum_{l=\nu}^{\infty}\|\sum_{j,k}\a_{j,k}\chi_{Q_j^k\cap(\Omega_{k+l}\setminus\Omega_{k+l+1})}\|_{X}\\
&\le& \nu\|M\|_{X}\|f\|_{X}+2^{n\d}c\sum_{l=\nu}^{\infty}\Big(\frac{1-\eta}{2^n}\Big)^{\l\d}\|\sum_{j,k}\a_{j,k}\chi_{Q_j^k}\|_{X}\\
&\le& \nu\|M\|_{X}\|f\|_{X}+\frac{1}{2}\|{\mathcal M}_{\mathcal S}^{\star}f\|_{X}.
\end{eqnarray*}
Since ${\mathcal S}$ is finite, by (iv) of Definition \ref{BFS} we obtain that $\|{\mathcal M}_{\mathcal S}^{\star}f\|_{X}<\infty$.
Hence,
$$\|{\mathcal M}_{\mathcal S}^{\star}f\|_{X}\le 2\nu\|M\|_{X}\|f\|_{X},$$
and this completes the proof of $(\rm{ii})\Rightarrow (i)$.
\end{proof}

\section{Proof of Theorem \ref{mr}}
Take $X=L^{p(\cdot)}_w$ in Theorem \ref{char}. All we have to do is to check condition (ii) in this theorem. In order to do that, we need
a kind of the reverse H\"older property for the weights $(tw(x))^{p(x)}$. The following key lemma provides a replacement of such a property
which is enough for our purposes.

\begin{lemma}\label{key}
Let $1<p_-\le p_+<\infty$. Assume that $w(\cdot)^{p(\cdot)}\in A_{\infty}$ and that $M$ is bounded on~$L^{p(\cdot)}_w$.
Then there exist $\ga>1$ and $c,\eta>0$, and there is a measure $b$ on ${\mathbb R}^n$
such that for every cube $Q$ and all $t>0$ such that $t\|\chi_Q\|_{L^{p(\cdot)}_w}\le 1$ one has
\begin{eqnarray}
&&|Q|\left(\frac{1}{|Q|}\int_Q(tw(x))^{\ga p(x)}dx\right)^{1/\ga}\label{maincond}\\
&&\le c\int_Q(tw(x))^{p(x)}dx+2t^{\eta}b(Q)\chi_{(0,1)}(t),\nonumber
\end{eqnarray}
and for every finite family of pairwise disjoint cubes $\pi$, $\sum_{Q\in \pi}b(Q)\le c.$
\end{lemma}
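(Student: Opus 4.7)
The plan is to combine the classical reverse H\"older inequality for the $A_\infty$ weight $W(x):=w(x)^{p(x)}$ with Lemmas~\ref{rh} and~\ref{strf}, whose role is to replace the missing pointwise regularity of $p$ by an averaged regularity extracted from the boundedness of $M$ on $L^{p(\cdot)}_w$. Since $w(\cdot)^{p(\cdot)}\in A_\infty$, the standard $A_\infty$ reverse H\"older inequality yields $\gamma>1$ and $C_0>0$ with
$$\Big(\frac{1}{|Q|}\int_Q w^{\gamma p(x)}\,dx\Big)^{1/\gamma}\le \frac{C_0}{|Q|}\int_Q w^{p(x)}\,dx,$$
which settles (\ref{maincond}) at once in the unperturbed case $t=1$.

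For general $t>0$ satisfying $t\|\chi_Q\|_{L^{p(\cdot)}_w}\le 1$, Lemma~\ref{modest} yields $\int_Q(tw)^{p(x)}\,dx\le 1$. I would then use the factorization $(tw)^{\gamma p(x)}=(tw)^{p(x)}\cdot(tw)^{(\gamma-1)p(x)}$ together with the splitting of $Q$ along the level set $\{tw=1\}$. On $\{tw\le 1\}\cap Q$ the factor $(tw)^{(\gamma-1)p(x)}\le 1$, so this part contributes at most $\int_Q(tw)^{p(x)}\,dx$ to $\int_Q(tw)^{\gamma p(x)}\,dx$ and thus feeds directly into the main term in~(\ref{maincond}). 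On $\{tw>1\}\cap Q$ I would apply Lemma~\ref{rh} to obtain a weighted variable-exponent reverse H\"older estimate for $(tw)^{p(\cdot)}$ with a remainder of the form $t^\eta b(Q)$, $\eta>0$, where $b$ is a finite Borel measure produced from the $L^{p(\cdot)}_w$-boundedness of $M$ via Lemma~\ref{strf}. The summability $\sum_{Q\in\pi}b(Q)\le c$ then follows because $b(\mathbb{R}^n)\le c$. The cutoff $\chi_{(0,1)}(t)$ appears because for $t\ge 1$ the normalization $t\|\chi_Q\|_{L^{p(\cdot)}_w}\le 1$ already forces $\int_Q w^{p(x)}\,dx$ to be small enough that the estimate closes without correction.

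The main obstacle is the estimate on $\{tw>1\}\cap Q$ when $t<1$: a pointwise bound on $(tw)^{(\gamma-1)p(x)}$ loses a factor that depends on the local oscillation $p_+^Q-p_-^Q$ of $p$, over which we have no direct control under the hypotheses. This is precisely the difficulty addressed by Lemmas~\ref{rh} and~\ref{strf}: the former converts the pointwise defect into an averaged reverse H\"older estimate with remainder, while the latter, which uses the boundedness of $M$ on $L^{p(\cdot)}_w$, guarantees that the remainder is packaged into a finite measure $b$ with the required summability.
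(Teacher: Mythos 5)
Your outline has two genuine gaps. First, the splitting of $\int_Q(tw)^{\gamma p(x)}\,dx$ along the level set $\{tw=1\}$ does not ``feed directly'' into the main term in~(\ref{maincond}), because the outer power $(\cdot)^{1/\gamma}$ gets in the way: even if $\int_{Q\cap\{tw\le 1\}}(tw)^{\gamma p}\,dx\le\int_Q(tw)^{p}\,dx$, the left side of~(\ref{maincond}) would then be bounded by $|Q|^{1-1/\gamma}\bigl(\int_Q(tw)^{p}\,dx\bigr)^{1/\gamma}$, which dominates $c\int_Q(tw)^{p}\,dx$ (rather than being dominated by it) whenever $\int_Q(tw)^{p}\,dx$ is small compared to $|Q|$ --- exactly the regime that matters when $t<1$. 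Second, you never say where the factor $t^{\eta}$ comes from, and Lemmas~\ref{rh} and~\ref{strf} by themselves do not produce it: Lemma~\ref{strf} gives an estimate in terms of the exponent $r$ and the measure $b(Q)$, but with no gain in $t$. The gain is created by first proving a separate reverse H\"older inequality (Lemma~\ref{RH} in the paper) for $(tw)^{p(\cdot)}$ on a range of $t$ of the form $[\min(1,\|\chi_Q\|^{-(1+\e)}),\max(1,\|\chi_Q\|^{-(1+\e)})]$ --- note the crucial extra power $1+\e$ --- and that lemma, not the one-line reverse H\"older for $w^{p(\cdot)}$, is where $w(\cdot)^{p(\cdot)}\in A_{\infty}$ is used in an essential way (through $A_s$, H\"older with exponents tied to $s$, a median value of $p$ over $Q$, and the converse estimate~(\ref{conv})).

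Once Lemma~\ref{RH} is in hand, the actual structure of the proof is: for $t\ge 1$, the hypothesis $t\|\chi_Q\|_{L^{p(\cdot)}_w}\le1$ puts $t$ in the admissible range of Lemma~\ref{RH}, so~(\ref{maincond}) holds with $b\equiv 0$; this, not smallness of $\int_Q w^{p}$, is why the correction term carries $\chi_{(0,1)}(t)$. For $t<1$ one argues by dichotomy: either the reverse H\"older ratio on $Q$ is already bounded by a large constant $A$, in which case the first term of~(\ref{maincond}) suffices, or it is not, in which case a H\"older step (with exponents $q=\frac{1+r(s-1)}{1+\gamma(s-1)}$, $q'$) and the $A_s$ bound convert the $\gamma p$-average into an $rp$-average of $t^{1/(1+\e)}w$, and the failure of the $A$-bound combined with Lemma~\ref{RH} shows $t^{1/(1+\e)}\le\|\chi_Q\|^{-1}_{L^{p(\cdot)}_w}$, so Lemma~\ref{strf} applies and the main term on its right side is absorbed by the large-$A$ hypothesis, leaving $|Q|\bigl(\frac1{|Q|}\int_Q(t^{1/(1+\e)}w)^{rp}\bigr)^{1/r}\le 2b(Q)$. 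Pulling out $t^{\e/(1+\e)}$ (using $t<1$ and $p\ge p_-$) gives the $t^{\eta}$ factor, and a final H\"older step from exponent $r$ down to $\gamma<r$ finishes. So the heart of the matter is the extra $\e$-power in Lemma~\ref{RH}, which your sketch does not introduce; without it you cannot bootstrap from Lemma~\ref{strf} to an estimate that decays in $t$.
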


The proof of this lemma is rather technical, and we postpone it until the next Section. Let us see now how the proof of Theorem \ref{mr} follows.

\begin{proof}[Proof of Theorem \ref{mr}]
Let ${\mathscr D}$ be a dyadic grid, and let ${\mathcal S}\subset {\mathscr D}$ be a finite $\frac{1}{2}$-sparse family. Let $\{G_Q\}_{Q\in {\mathcal S}}$
be a family of pairwise disjoint sets such that $G_Q\subset Q$. Take any sequence of non-negative numbers $\{\a_Q\}_{Q\in {\mathcal S}}$ such that
\begin{equation}\label{eqone}
\Big\|\sum_{Q\in {\mathcal S}}\a_Q\chi_Q\Big\|_{L^{p(\cdot)}_w}=1.
\end{equation}
By Lemma \ref{modest} and Theorem \ref{char}, it suffices to show that there exist absolute constants $c,\d>0$ such that
\begin{equation}\label{suff}
\sum_{Q\in {\mathcal S}}\int_{G_Q}(\a_Qw(x))^{p(x)}dx\le c\left(\max_{Q\in {\mathcal S}}\frac{|G_Q|}{|Q|}\right)^{\d}.
\end{equation}

It follows from (\ref{eqone}) that $\a_Q\|\chi_Q\|_{L^{p(\cdot)}_w}\le 1$ for every $Q\in {\mathcal S}$.
Therefore, if $\a_Q\ge 1$, by Lemma \ref{key} and H\"older's inequality along with (\ref{eqone}) we obtain
\begin{eqnarray}
&&\sum_{Q\in {\mathcal S}:\a_Q\ge 1}\int_{G_Q}(\a_Qw(x))^{p(x)}dx\label{bigone}\\
&&\le \sum_{Q\in {\mathcal S}:\a_Q\ge 1}|Q|\left(\frac{|G_Q|}{|Q|}\right)^{1/\ga'}\left(\frac{1}{|Q|}\int_Q(\a_Qw(x))^{\ga p(x)}dx\right)^{1/\ga}\nonumber\\
&&\le c\sum_{Q\in {\mathcal S}:\a_Q\ge 1}\left(\frac{|G_Q|}{|Q|}\right)^{1/\ga'}\int_Q(\a_Qw(x))^{p(x)}dx\le c\left(\max_{Q\in {\mathcal S}}\frac{|G_Q|}{|Q|}\right)^{1/\ga'}.\nonumber
\end{eqnarray}

The case when $\a_Q<1$ is more complicated because of the additional term on the right-hand side of (\ref{maincond}). We proceed as follows.
Denote
$$
{\mathcal S}_k=\{Q\in {\mathcal S}: 2^{-k}\le \a_Q< 2^{-k+1}\}\quad(k\in {\mathbb N}).
$$
Let $Q_i^k$ be the maximal cubes from ${\mathcal S}_k$ such that every other cube $Q\in {\mathcal S}_k$ is contained in one of them.
Then the cubes $Q_i^k$ are pairwise disjoint (for $k$ fixed). Set
$$\psi_{Q_i^k}(x)=\sum_{Q\in {\mathcal S}_k:Q\subseteq Q_i^k}\chi_{G_Q}(x).$$
Then
\begin{eqnarray*}
&&\sum_{Q\in {\mathcal S}:\a_Q<1}\int_{G_Q}(\a_Qw(x))^{p(x)}dx=\sum_{k=1}^{\infty}\sum_{Q\in {\mathcal S}_k}\int_{G_Q}(\a_Qw(x))^{p(x)}dx\\
&&=\sum_{k=1}^{\infty}\sum_i\sum_{Q\in {\mathcal S}_k:Q\subseteq Q_i^k}\int_{G_Q}(\a_Qw(x))^{p(x)}dx\\
&&\le 2^{p_+}\sum_{i,k}\int_{Q_i^k}(\a_{Q_i^k}w(x))^{p(x)}\psi_{Q_i^k}(x)dx.
\end{eqnarray*}

By H\"older's inequality,
\begin{eqnarray*}
&&\sum_{i,k}\int_{Q_i^k}(\a_{Q_i^k}w(x))^{p(x)}\psi_{Q_i^k}(x)dx\\
&&\le \sum_{i,k}|Q_i^k|\left(\frac{1}{|Q_i^k|}\int_{Q_i^k}(\a_{Q_i^k}w(x))^{\ga p(x)}dx\right)^{1/\ga}\left(\frac{1}{|Q_i^k|}\int_{Q_i^k}\psi_{Q_i^k}(x)^{\ga'}dx\right)^{1/\ga'}.
\end{eqnarray*}
Since ${\mathcal S}$ is $\frac{1}{2}$-sparse,
\begin{eqnarray*}
\int_{Q_i^k}\psi_{Q_i^k}(x)^{\ga'}dx=\sum_{Q\in {\mathcal S}_k:Q\subseteq Q_i^k}|G_Q|&\le&
\left(\max_{Q\in {\mathcal S}}\frac{|G_Q|}{|Q|}\right)\sum_{Q\in {\mathcal S}_k:Q\subseteq Q_i^k}|Q|\\
&\le& 2\left(\max_{Q\in {\mathcal S}}\frac{|G_Q|}{|Q|}\right)|Q_i^k|.
\end{eqnarray*}
Combining this with the two previous estimates yields
\begin{eqnarray*}
&&\sum_{Q\in {\mathcal S}:\a_Q<1}\int_{G_Q}(\a_Qw(x))^{p(x)}dx\\
&&\le c\left(\max_{Q\in {\mathcal S}}\frac{|G_Q|}{|Q|}\right)^{1/\ga'}
\sum_{i,k}|Q_i^k|\left(\frac{1}{|Q_i^k|}\int_{Q_i^k}(\a_{Q_i^k}w(x))^{\ga p(x)}dx\right)^{1/\ga}.
\end{eqnarray*}

By Lemma \ref{key} along with (\ref{eqone}), and Lemma \ref{modest},
\begin{eqnarray*}
\sum_{i,k}|Q_i^k|\left(\frac{1}{|Q_i^k|}\int_{Q_i^k}(\a_{Q_i^k}w(x))^{\ga p(x)}dx\right)^{1/\ga}\le c+2\sum_{i,k}\a_{Q_i^k}^{\eta}b(Q_i^k).
\end{eqnarray*}
Since for every fixed $k$, the cubes $\{Q_i^k\}$ are pairwise disjoint,
$$\sum_{i,k}\a_{Q_i^k}^{\eta}b(Q_i^k)\le 2^{\eta}\sum_{k=1}^{\infty}2^{-k\eta}\sum_{i}b(Q_i^k)\le c\sum_{k=1}^{\infty}2^{-k\eta}\le c.$$
This, combined with the two previous estimates implies
$$
\sum_{Q\in {\mathcal S}:\a_Q<1}\int_{G_Q}(\a_Qw(x))^{p(x)}dx\le c\left(\max_{Q\in {\mathcal S}}\frac{|G_Q|}{|Q|}\right)^{1/\ga'},
$$
which along with (\ref{bigone}) proves (\ref{suff}).
\end{proof}

\section{Proof of Lemma \ref{key}}
We split the proof of Lemma \ref{key} into several pieces.
Lemmas \ref{rh} and~\ref{strf} below are due to L. Diening \cite{D1}. We give slightly shortened versions of their proofs for the sake of completeness.
Notice that these lemmas hold for arbitrary weights $w$ such that $w(\cdot)^{p(\cdot)}$ is locally integrable.
Lemma \ref{RH} is new. The assumption that $w(\cdot)^{p(\cdot)}\in A_{\infty}$ is essential there. Throughout this section, we assume that
$p_->1$ and $p_+<\infty$.

\begin{lemma}\label{rh}
Assume that $M$ is bounded on $L^{p(\cdot)}_w$.
Then there exist $r,c>1$ such that for every family of pairwise disjoint cubes $\pi$
and for every sequence of non-negative numbers $\{t_Q\}_{Q\in\pi}$,
$$\sum_{Q\in \pi}\int_Q(t_Qw(x))^{p(x)}dx\le 1\Rightarrow \sum_{Q\in \pi}|Q|\left(\frac{1}{|Q|}\int_Q(t_Qw(x))^{rp(x)}dx\right)^{1/r}\le c.$$
\end{lemma}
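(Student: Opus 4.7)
The strategy is a Rubio de Francia iteration followed by a reduction to a reverse Hölder argument for the resulting $A_1$ majorant. Let $f=\sum_{Q\in\pi}t_Q\chi_Q$, so the hypothesis reads $\varrho(fw)\le 1$, whence $\|fw\|_{L^{p(\cdot)}}\le 1$. Since $M$ is bounded on $L^{p(\cdot)}_w$, the conjugated operator $Tg=wM(g/w)$ is bounded on the unweighted $L^{p(\cdot)}$ with the same norm. The Rubio de Francia iterate
$$Rg=\sum_{k\ge 0}\frac{T^k g}{(2\|T\|)^k}$$
applied to $g=fw$ satisfies $Rg\ge fw$, $\|Rg\|_{L^{p(\cdot)}}\le 2$, and $T(Rg)\le 2\|T\|\,Rg$. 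Setting $\phi=Rg/w$, the last relation is the pointwise bound $M\phi\le C\phi$ with $C=2\|T\|$ depending only on $\|M\|_{L^{p(\cdot)}_w}$, so $\phi$ is a uniform $A_1$ weight with $\phi\ge f$, while $\|\phi w\|_{L^{p(\cdot)}}\le 2$ combined with Lemma \ref{modest} gives $\varrho(\phi w)\le 2^{p_+}$.

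The classical reverse Hölder inequality for $A_1$ weights then supplies an exponent $s>1$ and a constant $c_0$, depending only on $C$, such that
$$\Big(\frac{1}{|Q|}\int_Q\phi^s\,dx\Big)^{1/s}\le c_0\,\frac{1}{|Q|}\int_Q\phi\,dx$$
for every cube $Q$. Fix $r\in(1,s/p_+)$. Since $t_Q\le\phi$ on $Q$ for every $Q\in\pi$, it suffices to prove the per-cube estimate
$$|Q|\Big(\frac{1}{|Q|}\int_Q(\phi w)^{rp(x)}\,dx\Big)^{1/r}\le c_1\int_Q(\phi w)^{p(x)}\,dx,$$
for then summing over the disjoint family $\pi$ and using $\sum_{Q\in\pi}\int_Q(\phi w)^{p(x)}dx\le\varrho(\phi w)\le 2^{p_+}$ yields the conclusion.

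The main obstacle is establishing this per-cube estimate: the reverse Hölder inequality controls $L^s$-averages of $\phi$ alone, whereas the integrand $(\phi w)^{rp(x)}$ intertwines $\phi$ and $w$ inside a variable exponent. I would split $Q$ into $\{\phi w\ge 1\}$ and $\{\phi w<1\}$ so that on each region $(\phi w)^{rp(x)}$ is pinned between $(\phi w)^{rp_-}$ and $(\phi w)^{rp_+}$, and then apply Hölder's inequality with conjugate pair $\bigl(s/(rp_+),\,s/(s-rp_+)\bigr)$ to factor out a power of $\phi$ of degree at most $s$, controlled by reverse Hölder, while the complementary power of $w$ is absorbed into the bounds implied by $\varrho(\phi w)\le 2^{p_+}$ together with the local integrability of $w^{-p'(\cdot)}$. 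The resulting constants $r$ and $c$ depend only on $p_-$, $p_+$, and $\|M\|_{L^{p(\cdot)}_w}$.
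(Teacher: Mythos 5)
Your reduction to a per-cube estimate for the Rubio de Francia majorant $\phi$ is clean up to that point: the conjugated operator $T$, the construction of $Rg$, the fact that $\phi=Rg/w$ is an $A_1$ weight with constant depending only on $\|M\|_{L^{p(\cdot)}_w}$, and the inequalities $\phi\ge f$, $\varrho(\phi w)\le 2^{p_+}$, and $\sum_{Q\in\pi}\int_Q(\phi w)^{p(x)}dx\le 2^{p_+}$ all hold. But the per-cube estimate you reduce to,
$$|Q|\Big(\frac{1}{|Q|}\int_Q(\phi w)^{rp(x)}\,dx\Big)^{1/r}\le c_1\int_Q(\phi w)^{p(x)}\,dx,$$
is a genuine reverse H\"older inequality for the function $(\phi w)^{p(\cdot)}$, uniformly over cubes $Q\in\pi$, and nothing in the hypotheses supplies it. The classical reverse H\"older inequality you invoke controls $L^s$-averages of $\phi$ alone, and the H\"older step you sketch (with conjugate pair $\bigl(s/(rp_+),\,s/(s-rp_+)\bigr)$) produces an average of a fixed positive power of $w$, namely $\frac{1}{|Q|}\int_Q w^{rp_+\cdot s/(s-rp_+)}dx$; neither $\varrho(\phi w)\le 2^{p_+}$ (a modular bound on $(\phi w)^{p(x)}$, not on any constant power of $w$) nor local integrability of $w^{p(\cdot)}$ and $w^{-p'(\cdot)}$ gives any quantitative, cube-independent control over such a quantity. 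In short, you have decoupled $\phi$ from $w$ only at the price of an uncontrolled integral of a power of $w$.

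More fundamentally, the per-cube estimate is essentially the assertion that $(\phi w)^{p(\cdot)}$ satisfies a uniform reverse H\"older inequality, which is an $A_\infty$-type condition. In this paper that conclusion is exactly the content of Lemma~\ref{RH}, and there it requires the additional hypothesis $w(\cdot)^{p(\cdot)}\in A_\infty$; Lemma~\ref{rh} is carefully stated so that it holds \emph{without} any such assumption, precisely because it claims only the summed form over a disjoint family, not a uniform per-cube bound. The paper obtains the summed form by a completely different mechanism: for each $Q$ and each level $k$ it runs a local Calder\'on--Zygmund decomposition of $v_Q=(t_Qw)^{p(\cdot)}$, then uses the boundedness of $M$ on $L^{p(\cdot)}_w$ together with the modular--norm Lemma~\ref{modest} to prove geometric decay of $\sum_Q\int_{\Omega_k(Q)}(t_Qw)^{p(x)}dx$ in $k$, and finally computes $\int_Q(t_Qw)^{(1+\e)p(x)}dx$ by a layer-cake argument. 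No per-cube reverse H\"older is ever proved or needed. So the Rubio de Francia majorant, while an attractive idea, reduces the problem to a statement that is strictly stronger than the lemma and is not available at this stage; this is a real gap, not a technical one.
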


\begin{proof}
Given a family $\pi$ and a sequence $\{t_Q\}_{Q\in \pi}$, denote $v_Q(x)=(t_Qw(x))^{p(x)}$ and $\a_Q=\frac{1}{|Q|}\int_Qv_Qdx$.

By Lemma \ref{cz}, write the set
$$\Omega_k(Q)=\{x\in Q:M^d_Qv_Q(x)>(2^{n+1})^{k}\a_Q\}\quad(k\in {\mathbb N})$$
as a union of  pairwise disjoint cubes $P_j^k(Q)$ satisfying $|E_j^k(Q)|\ge \frac{1}{2}|P_j^k(Q)|$, where
$E_j^k(Q)=P_j^k(Q)\setminus \Omega_{k+1}(Q)$. From this,
$$\sum_{Q\in \pi}t_Q\chi_{\Omega_k(Q)}\le 2M\big(\sum_{Q\in \pi}t_Q\chi_{\Omega_k(Q)\setminus\Omega_{k+1}(Q)}\big),$$
and hence,
$$\Big\|\sum_{Q\in \pi}t_Q\chi_{\Omega_k(Q)}\Big\|_{L^{p(\cdot)}_w}\le 2\|M\|_{L^{p(\cdot)}_w}\Big\|\sum_{Q\in \pi}t_Q\chi_{\Omega_k(Q)\setminus\Omega_{k+1}(Q)}\Big\|_{L^{p(\cdot)}_w}.$$

Setting $t_Q'=\frac{t_Q}{\|\sum_{Q\in \pi}t_Q\chi_{\Omega_k(Q)}\|_{L^{p(\cdot)}_w}}$, this inequality yields
$$1\le 2\|M\|_{L^{p(\cdot)}_w}\Big\|\sum_{Q\in \pi}t_Q'\chi_{\Omega_k(Q)\setminus\Omega_{k+1}(Q)}\Big\|_{L^{p(\cdot)}_w}.$$
Since 
$$\Big\|\sum_{Q\in \pi}t_Q'\chi_{\Omega_k(Q)\setminus\Omega_{k+1}(Q)}\Big\|_{L^{p(\cdot)}_w}\le 
\Big\|\sum_{Q\in \pi}t_Q'\chi_{\Omega_k(Q)}\Big\|_{L^{p(\cdot)}_w}=1,$$
Lemma \ref{modest} along with the previous estimate implies,
\begin{eqnarray*}
\frac{1}{(2\|M\|_{L^{p(\cdot)}_w})^{p_+}}&\le& \sum_{Q\in \pi}\int_{\Omega_k(Q)\setminus\Omega_{k+1}(Q)}(t'_Qw(x))^{p(x)}dx\\
&\le& 1-\sum_{Q\in \pi}\int_{\Omega_{k+1}(Q)}(t'_Qw(x))^{p(x)}dx,
\end{eqnarray*}
which in turn implies (again, by Lemma \ref{modest})
$$\Big\|\sum_{Q\in \pi}t'_Q\chi_{\Omega_{k+1}(Q)}\Big\|_{L^{p(\cdot)}_w}\le \b,$$
where $\b=\Big(1-\frac{1}{(2\|M\|_{L^{p(\cdot)}_w})^{p_+}}\Big)^{1/p_+}$.
Hence,
$$\Big\|\sum_{Q\in \pi}t_Q\chi_{\Omega_{k+1}(Q)}\Big\|_{L^{p(\cdot)}_w}\le \b
\Big\|\sum_{Q\in \pi}t_Q\chi_{\Omega_{k}(Q)}\Big\|_{L^{p(\cdot)}_w},$$
and thus, $\|\sum_{Q\in \pi}t_Q\chi_{\Omega_{k}(Q)}\|_{L^{p(\cdot)}_w}\le \b^{k-1},$ which by Lemma \ref{modest} implies
\begin{equation}\label{estwq}
\sum_{Q\in \pi}\int_{\Omega_{k}(Q)}(t_Qw(x))^{p(x)}dx\le \b^{{p_-}(k-1)}.
\end{equation}

Denote $\Omega_0(Q)=Q$. Then, for $\e>0$ we have
\begin{eqnarray*}
\int_Q(t_Qw(x))^{(1+\e)p(x)}dx&=&\sum_{k=0}^{\infty}\int_{\Omega_k(Q)\setminus\Omega_{k+1}(Q)}
(t_Qw(x))^{(1+\e)p(x)}dx\\
&\le& \a_Q^{\e}\sum_{k=0}^{\infty}2^{(n+1)(k+1)\e}\int_{\Omega_k(Q)}(t_Qw(x))^{p(x)}dx.
\end{eqnarray*}

Take $\e>0$ such that $\sum_{k=0}^{\infty}(2^{(n+1)\e}\b^{p_-})^k<\infty$. Then, combining the previous estimate with (\ref{estwq}) and H\"older's inequality, we
obtain
\begin{eqnarray*}
&&\sum_{Q\in \pi}|Q|\left(\frac{1}{|Q|}\int_Q(t_Qw(x))^{(1+\e)p(x)}dx\right)^{\frac{1}{1+\e}}\\
&&\le\sum_{Q\in \pi}v_Q(Q)^{\frac{\e}{1+\e}}\left(\sum_{k=0}^{\infty}2^{(n+1)(k+1)\e}v_Q(\Omega_k(Q))\right)^{\frac{1}{1+\e}}\\
&&\le \left(2^{n+1}\e+\sum_{k=1}^{\infty}2^{(n+1)(k+1)\e}\b^{{p_-}(k-1)}\right)^{\frac{1}{1+\e}}\le c,
\end{eqnarray*}
and therefore, the proof is complete.
\end{proof}

\begin{lemma}\label{strf}
Assume that $M$ is bounded on $L^{p(\cdot)}_w$. Then there exist $r,k>1$,
and a measure $b$ on ${\mathbb R}^n$ such that
the following properties hold: if $\int_Q(tw(x))^{p(x)}dx\le 1$, then
\begin{equation}\label{eqb}
|Q|\left(\frac{1}{|Q|}\int_Q(tw(x))^{rp(x)}dx\right)^{1/r}\le k\int_Q(tw(x))^{p(x)}dx+b(Q),
\end{equation}
and for every finite family of pairwise disjoint cubes $\pi$,
$\displaystyle\sum_{Q\in \pi}b(Q)\le 2k.$
\end{lemma}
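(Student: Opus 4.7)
The plan is to define $b(Q)$ directly as the pointwise defect of the desired inequality: set
\[
b(Q) := \sup_{t}\,\Big[|Q|\Big(\tfrac{1}{|Q|}\int_Q (tw)^{rp}\Big)^{1/r} - k\int_Q(tw)^p\Big]_+,
\]
where $r$ is the exponent supplied by Lemma~\ref{rh} and the supremum is taken over $t>0$ with $\int_Q(tw)^p\le 1$. With this choice, inequality~(\ref{eqb}) is tautological. Write $F_Q(t) := |Q|(\tfrac{1}{|Q|}\int_Q(tw)^{rp})^{1/r}$ and $G_Q(t) := \int_Q(tw)^p$; Lemma~\ref{rh} applied to the singleton family $\{Q\}$ also gives $F_Q(t)\le c_0$ whenever $G_Q(t)\le 1$ (where $c_0$ denotes the constant $c$ of that lemma), so automatically $b(Q)\le c_0$.

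The whole content of the lemma is then the packing estimate $\sum_{Q\in\pi}b(Q)\le 2k$ for every finite disjoint family $\pi$. For each $Q\in\pi$ I would pick a near-maximiser $t_Q$, so that $G_Q(t_Q)\le 1$ and $b(Q)\le F_Q(t_Q)-kG_Q(t_Q)+\varepsilon/(1+|\pi|)$. Setting $N:=\sum_{Q\in\pi}G_Q(t_Q)$ and $S:=\sum_{Q\in\pi}F_Q(t_Q)$, the fact that every individual $G_Q(t_Q)\le 1$ lets me iteratively extract subfamilies $\pi_1,\dots,\pi_m\subset\pi$ with $\sum_{Q\in\pi_j}G_Q(t_Q)\in(\tfrac12,1]$ (each extraction consumes at least $\tfrac12$ of what remains), together covering $\pi$ with $m\le 2N+1$. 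Applying Lemma~\ref{rh} to each $\pi_j$ yields $\sum_{Q\in\pi_j}F_Q(t_Q)\le c_0$, so $S\le(2N+1)c_0$, and therefore
\[
\sum_{Q\in\pi}b(Q)\;\le\;S-kN+\varepsilon\;\le\;N(2c_0-k)+c_0+\varepsilon.
\]
Choosing $k:=2c_0$ kills the $N$-term, and sending $\varepsilon\downarrow 0$ gives $\sum b(Q)\le c_0\le 2k$.

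The main obstacle I anticipate is the variable exponent. The seemingly more direct route of rescaling all $t_Q$ by a single common factor $\mu:=N^{-1/p_-}$, in order to enforce $\sum G_Q(\mu t_Q)\le 1$ and apply Lemma~\ref{rh} in one shot, loses a factor of order $N^{p_+/p_-}$ (since $\mu^{p(x)}$ only lies between $\mu^{p_+}$ and $\mu^{p_-}$), which ruins the estimate as soon as $p_+>p_-$; this is exactly what forces the partitioning step above rather than a global rescaling. A secondary technical point is to realise the nonnegative set function $Q\mapsto b(Q)$ as a bona fide Borel measure on $\mathbb{R}^n$, but given the uniform packing bound this is a routine Carleson-type extension and does not affect the essential argument.
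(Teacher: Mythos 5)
Your proposal is correct and takes a genuinely different route from the paper's. The paper constructs a critical threshold $t_Q = \sup A(Q)$, where $A(Q)$ is the set of admissible $t$ at which the reverse H\"older estimate with constant $k$ fails, uses a continuity argument to obtain the exact equality $|Q|\big(\frac{1}{|Q|}\int_Q(t_Qw)^{rp}\big)^{1/r}=k\int_Q(t_Qw)^{p}$ at that threshold, sets $b(Q)$ equal to this common value, and proves the packing bound by taking a subfamily $\pi'\subset\pi$ maximal with respect to $\sum_{\pi'}\int_Q(t_Qw)^p\le 2$, rescaling by $2^{-1/p_-}$ to apply Lemma~\ref{rh} once, and then using the critical-point equality to bootstrap back to $\sum_{\pi'}\int_Q(t_Qw)^p\le \frac{1}{2}$, contradicting maximality unless $\pi'=\pi$. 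You instead define $b(Q)$ directly as the supremum of the defect, making (\ref{eqb}) a tautology and sidestepping the continuity step altogether, and replace the maximal-subfamily argument by a bin-packing decomposition of $\pi$ into $m\le 2N+1$ subfamilies each of mass $\le 1$, on each of which Lemma~\ref{rh} applies, cancelling the linear-in-$N$ term by the choice $k=2c_0$. Your route yields a smaller $b(Q)$ and a $k$ without the $2^{p_+/p_-}$ factor, at the cost of invoking Lemma~\ref{rh} on $O(N)$ subfamilies rather than once; the paper's single application relies crucially on the critical-point equality, which your construction does not supply (and does not need). Two small points. First, your parenthetical claim that ``each extraction consumes at least $\frac{1}{2}$'' need not hold for the last bin, but $m\le 2N+1$ is still correct via the standard next-fit argument: for $j<m$, bin $j$ together with the first item of bin $j+1$ has mass $>1$, and each item is counted at most twice over $j=1,\dots,m-1$. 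Second, the concern about promoting $b$ to an honest Borel measure is moot rather than routine: the paper itself only defines $b$ as a set function on cubes, and all that is used downstream in Lemma~\ref{key} and Theorem~\ref{mr} is the Carleson packing estimate over finite disjoint families, so no extension is required.
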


\begin{proof} Let $r$ and $c$ be the constants from Lemma \ref{rh}. Set $k=2^{\frac{p_+}{p_-}+1}c$.

Given a cube $Q$, denote by $A(Q)$ the set of $t>0$ such that
$$\int_Q(tw(x))^{p(x)}dx\le~1$$
and
\begin{equation}\label{qleft}
|Q|\left(\frac{1}{|Q|}\int_Q(tw(x))^{rp(x)}dx\right)^{1/r}>k\int_Q(tw(x))^{p(x)}dx.
\end{equation}

Let $t_Q=\sup A(Q)$ (if $A(Q)=\emptyset$, set $t_Q=0$). Then
\begin{equation}\label{impcond}
\int_Q(t_Qw(x))^{p(x)}dx<1.
\end{equation}
Indeed, if $\int_Q(t_Qw(x))^{p(x)}dx=~1$,
we obtain
$$|Q|\left(\frac{1}{|Q|}\int_Q(t_Qw(x))^{rp(x)}dx\right)^{1/r}\ge k,$$
and this would contradict Lemma~\ref{rh}.
Further, we have
\begin{equation}\label{qleft1}
|Q|\left(\frac{1}{|Q|}\int_Q(t_Qw(x))^{rp(x)}dx\right)^{1/r}=k\int_Q(t_Qw(x))^{p(x)}dx,
\end{equation}
since otherwise (\ref{qleft}) holds with $t=t_Q$, and by continuity, using also (\ref{impcond}), we would obtain that $t_Q+\e\in A(Q)$ for some $\e>0$,
which contradicts the definition of $t_Q$.

Set now
$$b(Q)=|Q|\left(\frac{1}{|Q|}\int_Q(t_Qw(x))^{rp(x)}dx\right)^{1/r}.$$
Then (\ref{eqb}) holds trivially.

Let $\pi$ be any finite family of pairwise disjoint cubes. Let $\pi'\subseteq \pi$ be a maximal subset such that $\sum_{Q\in \pi'}\int_Q(t_Qw(x))^{p(x)}dx\le 2$
(maximal in the sense of the number of elements; this set is not necessarily unique, in general). We claim that $\pi'=\pi$. Indeed,
assume that $\pi'\not=\pi$.
Then we have $\sum_{Q\in \pi'}\int_Q(t_Qw(x)/2^{1/p_-})^{p(x)}dx\le 1,$ and by Lemma~\ref{rh},
$$\sum_{Q\in \pi'}|Q|\left(\frac{1}{|Q|}\int_Q(t_Qw(x))^{rp(x)}dx\right)^{1/r}\le 2^{\frac{p_+}{p_-}}c.$$
From this and from (\ref{qleft1}),
$$\sum_{Q\in \pi'}\int_Q(t_Qw(x))^{p(x)}dx=\frac{1}{k}\sum_{Q\in \pi'}|Q|\left(\frac{1}{|Q|}\int_Q(t_Qw(x))^{rp(x)}dx\right)^{1/r}\le \frac{1}{2}.$$
Therefore, if $P\in \pi\setminus \pi'$, we obtain
$$\sum_{Q\in \pi'\cup\{P\}}\int_Q(t_Qw(x))^{p(x)}dx\le \frac{3}{2},$$
which contradicts the maximality of $\pi'$. This proves that $\pi'=\pi$. Hence,
$$\sum_{Q\in \pi}b(Q)=k\sum_{Q\in \pi}\int_Q(t_Qw(x))^{p(x)}dx\le 2k,$$
which completes the proof.
\end{proof}

\begin{lemma}\label{RH}
Assume that $w(\cdot)^{p(\cdot)}\in A_{\infty}$ and that $M$ is bounded on~$L^{p(\cdot)}_w$. There exist $\ga,c>1$ and $\e>0$ such that
if
\begin{equation}\label{condit}
t\in \big[\min\big(1,1/\|\chi_Q\|^{1+\e}_{L^{p(\cdot)}_w}\big), \max\big(1,1/\|\chi_Q\|^{1+\e}_{L^{p(\cdot)}_w}\big)\big],
\end{equation}
then
\begin{equation}\label{revhol}
\left(\frac{1}{|Q|}\int_Q(tw(x))^{\ga p(x)}dx\right)^{1/\ga}\le c\frac{1}{|Q|}\int_Q(tw(x))^{p(x)}dx.
\end{equation}
\end{lemma}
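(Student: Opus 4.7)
The plan is to read the target inequality (\ref{revhol}) as a reverse H\"older inequality with exponent $\ga$ for the pointwise product $f(x)=t^{p(x)}u(x)$, where $u(x)=w(x)^{p(x)}$. Since $u\in A_\infty$ by hypothesis, the classical reverse H\"older inequality (\ref{rhold}) yields $s_0>1$ and $c_0\ge 1$ with $\big(\tfrac{1}{|Q|}\int_Q u^{s_0}dx\big)^{1/s_0}\le c_0\,\tfrac{1}{|Q|}\int_Q u\,dx$. Choosing $\ga\in(1,s_0)$ close to $1$ will handle the $u$-part of the bound; the main work is absorbing the extra factor $t^{p(\cdot)}$, which is where condition (\ref{condit}) enters.

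First I would use Lemma \ref{modest} to turn (\ref{condit}) into modular information. Since $\|tw\chi_Q\|_{L^{p(\cdot)}}=t\|\chi_Q\|_{L^{p(\cdot)}_w}$, condition (\ref{condit}) forces this norm into $[\|\chi_Q\|_{L^{p(\cdot)}_w},\|\chi_Q\|_{L^{p(\cdot)}_w}^{-\e}]$ (or its reverse), so $\int_Q(tw)^{p(x)}dx$ is pinched between powers of $\|\chi_Q\|_{L^{p(\cdot)}_w}^{\pm\e p_{\pm}}$; crucially, it cannot be negligible relative to the $L^{p(\cdot)}_w$-scale of $Q$. Second, to handle the $x$-dependence of $t^{p(x)}$, I would decompose $Q$ along dyadic level sets $F_j=\{x\in Q : 2^j\le t^{p(x)}<2^{j+1}\}$; because $p(x)\in[p_-,p_+]$, the range of nonempty indices $j$ has length $O((p_+-p_-)|\log_2 t|)$. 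On each $F_j$ the perturbation $t^{p(x)}$ is essentially the constant $2^j$, so the reverse H\"older inequality for $u$, combined with the $A_\infty$ estimate (\ref{Ainf}) applied to $F_j\subset Q$, lets me bound $\int_{F_j} u^\ga dx$ cleanly in terms of $2^{-j}\int_{F_j}t^{p(x)}u\,dx$ and a power $(|F_j|/|Q|)^{\d}$ with $\d>0$ coming from the $A_\infty$ exponent.

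Summing the contributions $\int_Q t^{\ga p(x)}u^\ga dx=\sum_j\int_{F_j}t^{\ga p(x)}u^\ga dx$ and matching against the lower bound $\int_Q t^{p(x)}u\,dx\ge\sum_j 2^j u(F_j)$ should, in principle, close the argument with a constant independent of $Q$ and $t$. The main obstacle is precisely the balance in this final summation: the number of nonempty $F_j$ grows like $|\log_2 t|$, which under (\ref{condit}) can be as large as $(1+\e)|\log_2\|\chi_Q\|_{L^{p(\cdot)}_w}|$, while the $A_\infty$ gain from (\ref{Ainf}) only contributes a fixed positive power $\d$. I therefore expect to have to choose $\e$ small (depending on $p_\pm$, $c_0$, and the $A_\infty$ constants of $u$) so that the $A_\infty$ decay $(|F_j|/|Q|)^\d$ strictly dominates the combinatorial loss from the number of level sets, and simultaneously $\ga>1$ close enough to $1$ that $\ga\le s_0$ holds. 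The boundedness of $M$ on $L^{p(\cdot)}_w$ (through Lemma \ref{rh}) may be invoked at this stage to replace a crude H\"older step by a sharper sum-of-modulars estimate, providing the slack needed to make the absorption quantitative.
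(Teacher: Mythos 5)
The level-set decomposition you propose does not close, and the obstacle you flag at the end is in fact fatal in the form you describe. Splitting $Q$ into $F_j=\{2^j\le t^{p(x)}<2^{j+1}\}$ produces roughly $(p_+-p_-)\lvert\log_2 t\rvert$ nonempty pieces, and under \eqref{condit} this count is $\sim(1+\e)\big\lvert\log\|\chi_Q\|_{L^{p(\cdot)}_w}\big\rvert$, which is unbounded over $Q$. Your hoped-for compensating factor $(|F_j|/|Q|)^{\d}$ does not decay with $j$ or with $\lvert\log t\rvert$; it only sees the measure ratio, which is not linked to $j$. Worse, the estimates you would need point the wrong way. The reverse H\"older inequality \eqref{rhold} is a statement on whole cubes: passing to $\int_{F_j}u^{\ga}dx$ via H\"older and $\int_Q u^{s_0}dx$ reintroduces $u(Q)$, and to descend from $u(Q)$ to $u(F_j)$ you must use the converse estimate \eqref{conv}, which \emph{blows up} as $|F_j|/|Q|\to 0$ (exponent $-s$), rather than decaying. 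The direct $A_\infty$ bound \eqref{Ainf} gives decay, but it bounds $u(F_j)$ from \emph{above} by $u(Q)$, which is the wrong inequality for a lower bound on the right-hand side of \eqref{revhol}. There is also a power mismatch: your scheme produces $u(F_j)^{\ga}$ where you need $u(F_j)$, and the outer $1/\ga$-th power does not commute through the sum over $j$. Finally, Lemma~\ref{rh} is a modular estimate over pairwise disjoint \emph{cubes}, not over the disjoint measurable subsets $F_j\subset Q$; it cannot be invoked for that decomposition without an argument you haven't supplied.

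The paper avoids all of this by a different, much more surgical decomposition: it factors out $t^{\ga m_p(Q)}$, where $m_p(Q)$ is the \emph{median} of $p$ over $Q$, and observes that the residual $t^{\ga(p(x)-m_p(Q))}$ satisfies the uniform pointwise bound
$$t^{\ga(p(x)-m_p(Q))}\le 1+\|\chi_Q\|^{\ga m_p(Q)(1+\e)}_{L^{p(\cdot)}_w}\Big(\tfrac{1}{\|\chi_Q\|^{1+\e}_{L^{p(\cdot)}_w}}\Big)^{\ga p(x)}$$
coming directly from \eqref{condit}. This replaces your infinitely many level sets by a single two-term split \eqref{impstep}. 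The median is then exploited via $E_1=\{p\le m_p(Q)\}$ and $E_2=\{p\ge m_p(Q)\}$, both of proportional measure in $Q$, so that \eqref{conv} is used only with the fixed ratio $|Q|/|E_i|\le 2$ (no blow-up). The first term of \eqref{impstep} is handled by reverse H\"older for $u$, and the second by the boundedness of $M$ (to compare $\|\chi_Q\|$ with $\|\chi_{E_1}\|$), a modular estimate from Lemma~\ref{modest}, H\"older with exponents $q=\frac{1+r(s-1)}{1+\ga(s-1)}$, Lemma~\ref{rh}, and the $A_s$ constant. Crucially, $\e=\frac{r-\ga}{\ga(1+(s-1)r)}$ is dictated by exponent matching in that H\"older step, not by any combinatorial trade-off; this is where the boundedness of $M$ really enters, through the precise reverse-H\"older-on-scales Lemma~\ref{rh}, and it is quantitative from the start rather than providing vague ``slack''. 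If you want to salvage your approach, the missing idea is precisely the median split, which collapses the $t^{p(x)}$ perturbation to a single controlled factor rather than a logarithmic family of levels.
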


\begin{proof} By the definition of $A_{\infty}$, there is an $s>1$ such that $w(\cdot)^{p(\cdot)}\in A_{s}$. By (\ref{rhold}), $w(\cdot)^{p(\cdot)}$ satisfies the reverse H\"older inequality 
with an exponent $\nu>1$. Let $r>1$ be the exponent from Lemma \ref{rh}. Take any $\ga$ satisfying $1<\ga<\min(\nu,r)$. Set
$\e=\frac{r-\ga}{\ga(1+(s-1)r)}$.

For every $\a>0$,
$$
\left(\frac{1}{|Q|}\int_Q(tw(x))^{\ga p(x)}dx\right)^{1/\ga}=\left(\frac{1}{|Q|}\int_Qt^{\ga(p(x)-\a)}w(x)^{\ga p(x)}dx\right)^{1/\ga}t^{\a}.
$$
Next, by (\ref{condit}), for all $x\in Q$,
$$t^{\ga(p(x)-\a)}\le 1+\|\chi_Q\|_{L^{p(\cdot)}_w}^{\ga\a(1+\e)}(1/\|\chi_Q\|^{1+\e}_{L^{p(\cdot)}_w})^{\ga p(x)},$$
and hence,
\begin{eqnarray*}
\int_Qt^{\ga(p(x)-\a)}w(x)^{\ga p(x)}dx&\le& \int_Qw(x)^{\ga p(x)}dx\\
&+&\|\chi_Q\|_{L^{p(\cdot)}_w}^{\ga\a(1+\e)}\int_Q\left(\frac{w(x)}{\|\chi_Q\|^{1+\e}_{L^{p(\cdot)}_w}}\right)^{\ga p(x)}dx.
\end{eqnarray*}
Combining this with the previous estimates yields
\begin{eqnarray}
&&\left(\frac{1}{|Q|}\int_Q(tw(x))^{\ga p(x)}dx\right)^{1/\ga}\le \left(\frac{1}{|Q|}\int_Qw(x)^{\ga p(x)}dx\right)^{1/\ga}t^{\a}\label{impstep}\\
&&+\|\chi_Q\|_{L^{p(\cdot)}_w}^{\a(1+\e)}
\left(\frac{1}{|Q|}\int_Q\left(\frac{w(x)}{\|\chi_Q\|^{1+\e}_{L^{p(\cdot)}_w}}\right)^{\ga p(x)}dx\right)^{1/\ga}t^{\a}.\nonumber
\end{eqnarray}

Let $\a=m_p(Q)$ be a median value of $p$ over $Q$, that is, a number satisfying
$$\max\left(\frac{|\{x\in Q:p(x)>m_p(Q)\}|}{|Q|}, \frac{|\{x\in Q:p(x)<m_p(Q)\}|}{|Q|}\right)\le \frac{1}{2}.$$
Set $E_1=\{x\in Q: p(x)\le m_p(Q)\}$ and $E_2=\{x\in Q: p(x)\ge m_p(Q)\}$.
Then $|E_1|\ge \frac{1}{2}|Q|$ and $|E_2|\ge \frac{1}{2}|Q|$.

Suppose, for instance, that $\|\chi_Q\|_{L^{p(\cdot)}_w}\le 1$. Then $t\ge 1$.
Let us estimate the first term on the right-hand side of (\ref{impstep}). Since $\ga<\nu$, the reverse H\"older inequality implies
$$\left(\frac{1}{|Q|}\int_Qw(x)^{\ga p(x)}dx\right)^{1/\ga}\le c\frac{1}{|Q|}\int_Qw(x)^{p(x)}dx.$$
By (\ref{conv}) and since $|E_2|\ge \frac{1}{2}|Q|$, 
$$\int_Qw(x)^{p(x)}dx\le c\int_{E_2}w(x)^{p(x)}dx.$$ 
Using also that $t\ge 1$, we obtain
\begin{eqnarray}
\left(\frac{1}{|Q|}\int_Qw(x)^{\ga p(x)}dx\right)^{1/\ga}t^{m_p(Q)}&\le& \frac{c}{|Q|}t^{m_p(Q)}\int_{E_2}w(x)^{p(x)}dx\label{estim1}\\
&\le& \frac{c}{|Q|}\int_Q(tw(x))^{p(x)}dx.\nonumber
\end{eqnarray}

Turn to the second term on the right-hand side of (\ref{impstep}).
The boundedness of $M$ on $L^{p(\cdot)}_w$ implies
$\|\chi_Q\|_{L^{p(\cdot)}_w}\le c\|\chi_{E_1}\|_{L^{p(\cdot)}_w}.$
By Lemma \ref{modest} (to be more precise, we use here a local version of Lemma \ref{modest}; see \cite[p. 25]{CF} for details),
$$
\|\chi_{E_1}\|_{L^{p(\cdot)}_w}\le \left(\int_{E_1}w(x)^{p(x)}dx\right)^{1/p_+(E_1)}
\le \left(\int_{E_1}w(x)^{p(x)}dx\right)^{1/m_p(Q)},
$$
where $p_+(E_1)=\displaystyle \operatornamewithlimits{ess\, sup}_{x\in E_1} p(x)$.
As previously, by (\ref{conv}),  $\int_{E_1}w(x)^{p(x)}dx\le c\int_{E_2}w(x)^{p(x)}dx$. Therefore, combining the previous
estimates yields
\begin{equation}\label{stepp}
\|\chi_Q\|_{L^{p(\cdot)}_w}\le c\left(\int_{E_2}w(x)^{p(x)}dx\right)^{1/m_p(Q)}.
\end{equation}

Let $q=\frac{1+r(s-1)}{1+\ga(s-1)}$ and $q'=\frac{q}{q-1}$. Then $q(1+\e)\ga=r$ and $q'\e\ga=\frac{1}{s-1}$.
Hence, H\"older's inequality with the exponents $q$ and $q'$ along with Lemma \ref{rh} implies
\begin{eqnarray*}
&&\frac{1}{|Q|}\int_Q\left(\frac{w(x)}{\|\chi_Q\|^{1+\e}_{L^{p(\cdot)}_w}}\right)^{\ga p(x)}dx\\
&&\le \left(\frac{1}{|Q|}\int_Q\left(\frac{w(x)}{\|\chi_Q\|_{L^{p(\cdot)}_w}}\right)^{rp(x)}dx\right)^{1/q}\left(\frac{1}{|Q|}\int_Qw(x)^{-\frac{1}{s-1}p(x)}dx\right)^{1/q'}\\
&&\le c\frac{1}{|Q|^{\frac{r-1}{q}+1}}\left(\int_Qw(x)^{-\frac{1}{s-1}p(x)}dx\right)^{1/q'}.
\end{eqnarray*}

Notice that
$$\frac{r-1}{q}+1=\frac{r}{q}+\frac{1}{q'}=(1+\e)\ga+\e\ga(s-1)=\ga(s\e+1).$$
Therefore, from the previous estimate and from (\ref{stepp}),
\begin{eqnarray*}
&&\|\chi_Q\|_{L^{p(\cdot)}_w}^{m_p(Q)\e}
\left(\frac{1}{|Q|}\int_Q\left(\frac{w(x)}{\|\chi_Q\|^{1+\e}_{L^{p(\cdot)}_w}}\right)^{\ga p(x)}dx\right)^{1/\ga}\\
&&\le c\frac{|Q|^{s\e}}{|Q|^{\big(\frac{r-1}{q}+1\big)\frac{1}{\ga}}}\left(\frac{1}{|Q|}\int_Qw(x)^{p(x)}dx\right)^{\e}\left(\frac{1}{|Q|}\int_Qw(x)^{-\frac{1}{s-1}p(x)}dx\right)^{\e(s-1)}\\
&&\le c[w(\cdot)^{p(\cdot)}]_{A_s}^{\e}\frac{1}{|Q|}.
\end{eqnarray*}
From this, using (\ref{stepp}) again, we obtain
\begin{eqnarray*}
&&\|\chi_Q\|_{L^{p(\cdot)}_w}^{m_p(Q)(1+\e)}
\left(\frac{1}{|Q|}\int_Q\left(\frac{w(x)}{\|\chi_Q\|^{1+\e}_{L^{p(\cdot)}_w}}\right)^{\ga p(x)}dx\right)^{1/\ga}t^{m_p(Q)}\\
&&\le \frac{c}{|Q|}t^{m_p(Q)}\int_{E_2}w(x)^{p(x)}dx\le \frac{c}{|Q|}\int_Q(tw(x))^{p(x)}dx.
\end{eqnarray*}
This along with (\ref{impstep}) and (\ref{estim1}) proves (\ref{revhol}).

Finally, we note that the proof in the case when $\|\chi_Q\|_{L^{p(\cdot)}_w}\ge 1$ is the same, with reversed roles of the sets $E_1$ and $E_2$.
\end{proof}

\begin{proof}[Proof of Lemma \ref{key}]
Assume that $t\|\chi_Q\|_{L^{p(\cdot)}_w}\le 1$. If $t\ge 1$, then the conclusion of Lemma \ref{key} follows immediately from Lemma \ref{RH}.
Therefore, it remains to consider the case when $t<1$.

We may keep all main settings of Lemma \ref{RH}, namely, assume that $w(\cdot)^{p(\cdot)}\in A_s$, and take the same numbers
$\ga$ and $\e=\frac{r-\ga}{\ga(1+(s-1)r)}$.

If
$$
\left(\frac{1}{|Q|}\int_Q(tw(x))^{\ga p(x)}dx\right)^{1/\ga}\le A\frac{1}{|Q|}\int_Q(tw(x))^{p(x)}dx,
$$
where $A>0$ will be determined later, then (\ref{maincond}) is trivial. Suppose that
\begin{equation}\label{aaa}
\frac{1}{|Q|}\int_Q(tw(x))^{p(x)}dx<\frac{1}{A}\left(\frac{1}{|Q|}\int_Q(tw(x))^{\ga p(x)}dx\right)^{1/\ga}.
\end{equation}

As in the proof of Lemma \ref{RH}, take $q=\frac{1+r(s-1)}{1+\ga(s-1)}$ and apply
H\"older's inequality with the exponents $q$ and $q'$. We obtain
\begin{eqnarray*}
&&\left(\frac{1}{|Q|}\int_Q(tw(x))^{\ga p(x)}dx\right)^{1/\ga}\\
&&\le \left(\frac{1}{|Q|}\int_Q(t^{\frac{1}{1+\e}}w(x))^{r p(x)}dx\right)^{\frac{1+\e}{r}}
\left(\frac{1}{|Q|}\int_Qw^{-\frac{1}{s-1}p(x)}dx\right)^{(s-1)\e}.
\end{eqnarray*}
From this, applying H\"older's inequality again along with (\ref{aaa}) yields
\begin{eqnarray}
&&\frac{1}{|Q|}\int_Q(t^{\frac{1}{1+\e}}w(x))^{p(x)}dx\label{intest}\\
&&\le \left(\frac{1}{|Q|}\int_Q(tw(x))^{p(x)}dx\right)^{\frac{1}{1+\e}}
\left(\frac{1}{|Q|}\int_Qw(x)^{p(x)}dx\right)^{\frac{\e}{1+\e}}\nonumber\\
&&\le \frac{1}{A^{\frac{1}{1+\e}}}\left(\frac{1}{|Q|}\int_Q(tw(x))^{\ga p(x)}dx\right)^{\frac{1}{\ga(1+\e)}}
\left(\frac{1}{|Q|}\int_Qw(x)^{p(x)}dx\right)^{\frac{\e}{1+\e}}\nonumber\\
&&\le \frac{1}{A^{\frac{1}{1+\e}}}[w(\cdot)^{p(\cdot)}]_{A_s}^{\frac{\e}{1+\e}}
\left(\frac{1}{|Q|}\int_Q(t^{\frac{1}{1+\e}}w(x))^{r p(x)}dx\right)^{1/r}.\nonumber
\end{eqnarray}

Further, from (\ref{aaa}) and from Lemma \ref{RH},
$t^{\frac{1}{1+\e}}\le \frac{1}{\|\chi_Q\|_{L^{p(\cdot)}_w}}$ (here we assume that $A\ge c$, where $c$ is the constant from Lemma \ref{RH}).
Hence, by Lemma \ref{strf},
$$
|Q|\left(\frac{1}{|Q|}\int_Q(t^{\frac{1}{1+\e}}w(x))^{rp(x)}dx\right)^{1/r}\le k\int_Q(t^{\frac{1}{1+\e}}w(x))^{p(x)}dx+b(Q),
$$
where $b(Q)$ is defined in the proof of Lemma \ref{strf}. 
Thus, taking $A=\max((2k)^{1+\e}[w(\cdot)^{p(\cdot)}]_{A_s}^{\e},c)$, where $c$ is the constant from Lemma \ref{RH}, and applying (\ref{intest}), we obtain
$$
|Q|\left(\frac{1}{|Q|}\int_Q(t^{\frac{1}{1+\e}}w(x))^{rp(x)}dx\right)^{1/r}\le 2b(Q),
$$
which implies
$$
|Q|\left(\frac{1}{|Q|}\int_Q(tw(x))^{rp(x)}dx\right)^{1/r}\le 2t^{\frac{\e}{1+\e}p_-}b(Q).
$$
This along with H\"older's inequality (since $\ga<r$) proves (\ref{maincond}).
\end{proof}

\section{Concluding remarks and open questions}
\subsection{About the assumption $w(\cdot)^{p(\cdot)}\in A_{\infty}$}
We start with the following question.
\begin{que}\label{rem}
Is it possible to remove completely the assumption $w(\cdot)^{p(\cdot)}\in A_{\infty}$ in Theorem \ref{mr}?
\end{que}

Several remarks related to this question are in order.
Denote by $LH({\mathbb R}^n)$ the class of exponents $p(\cdot)$ with $p_->1, p_+<\infty$ and such that
$$|p(x)-p(y)|\le \frac{c}{\log({\rm{e}}+1/|x-y|)}\,\,\,\,\text{and}\,\,\,\,
|p(x)-p_{\infty}|\le \frac{c}{\log({\rm{e}}+|x|)}$$ for all $x,y\in {\mathbb R}^n$, where $c>0$ and $p_{\infty}\ge 1$.
Also denote by $A_{p(\cdot)}$ the class of weights such that
$$\sup_Q|Q|^{-1}\|\chi_Q\|_{L^{p(\cdot)}_w}\|\chi_Q\|_{L^{p'(\cdot)}_{w^{-1}}}<\infty.$$

It was shown in \cite{CDH,CFN} that if $p(\cdot)\in LH({\mathbb R}^n)$, then $M$ is bounded on $L^{p(\cdot)}_w$ if and only if $w\in A_{p(\cdot)}$.
An important ingredient in the proof in~\cite{CFN} is the fact that if $p(\cdot)\in LH({\mathbb R}^n)$ and $w\in A_{p(\cdot)}$, then $w(\cdot)^{p(\cdot)}\in A_{\infty}$.
Since the boundedness of $M$ on $L^{p(\cdot)}_w$ implies the $A_{p(\cdot)}$ condition trivially, we see that if $p(\cdot)\in LH({\mathbb R}^n)$, then
the assumption that $w(\cdot)^{p(\cdot)}\in A_{\infty}$ in Theorem~\ref{mr} is superfluous. However, we do not know whether this assumption can be removed (or at least weakened) in general.

It is well known (see, e.g., \cite[Th. 3.16]{CF}) that if $p(\cdot)\in LH({\mathbb R}^n)$, then $M$ is bounded on $L^{p(\cdot)}$.
This fact raises the following questions.

\begin{que}\label{anq}
Suppose that $M$ is bounded on $L^{p(\cdot)}$ and $w\in A_{p(\cdot)}$.
Does this imply $w(\cdot)^{p(\cdot)}\in A_{\infty}$?
\end{que}

\begin{que}\label{anq1}
Is it possible to replace in Theorem \ref{mr} the assumption $w(\cdot)^{p(\cdot)}\in A_{\infty}$ by the boundedness of
$M$ on $L^{p(\cdot)}$?
\end{que}

Question \ref{anq1} is closely related to another open question stated in \cite{DH} and \cite[p. 275]{CF}: is it possible to deduce the equivalence
$M:L^{p(\cdot)}_w\to L^{p(\cdot)}_w\Leftrightarrow w\in A_{p(\cdot)}$
assuming only that $M$ is bounded on $L^{p(\cdot)}$?

\subsection{An application}
It is a well known principle that if $M$ is bounded on a BFS $X$ and on $X'$, then some other basic
operators in harmonic analysis are also bounded on $X$. Consider, for instance, a Calder\'on-Zygmund operator $T$.
By this we mean that $T$ is an $L^2$ bounded integral operator represented as
$$Tf(x)=\int_{{\mathbb R}^n}K(x,y)f(y)dy,\quad x\not\in\text{supp}\,f,$$
with kernel $K$ satisfying $|K(x,y)|\le \frac{c}{|x-y|^n}$ for all $x\not=y$, and for some $0<\d\le 1$,
$$|K(x,y)-K(x',y)|+|K(y,x)-K(y,x')|\le
c\frac{|x-x'|^{\d}}{|x-y|^{n+\d}},$$ whenever $|x-x'|<|x-y|/2$.
It was shown in \cite{L} that
$$\int_{{\mathbb R}^n}|Tf(x)g(x)|dx\le c\int_{{\mathbb R}^n}Mf(x)Mg(x)dx.$$
This estimate along with (\ref{hold}) and (\ref{repr}) implies that if $M$ is bounded on a BFS $X$ and on $X'$, then
$T$ is bounded on $X$. Hence, Theorem \ref{mr} yields the following corollary.

\begin{cor}\label{corol}
Let $p:{\mathbb R}^n\to [1,\infty)$ be a measurable function such that $p_->1$ and $p_+<\infty$. Let $w$ be a weight such that $w(\cdot)^{p(\cdot)}\in A_{\infty}$.
If $M$ is bounded on $L^{p(\cdot)}_w({\mathbb R}^n)$, then $T$ is bounded on $L^{p(\cdot)}_w$.
\end{cor}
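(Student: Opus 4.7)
The plan is to deduce Corollary \ref{corol} directly from Theorem \ref{mr} together with the pointwise duality estimate for Calder\'on--Zygmund operators recalled in the text. Set $X=L^{p(\cdot)}_w$; as verified in Section~2.2, $X$ is a BFS, and by the identification $\big(L^{p(\cdot)}_w\big)'=L^{p'(\cdot)}_{w^{-1}}$ we have $X'=L^{p'(\cdot)}_{w^{-1}}$. Since $M$ is assumed bounded on $X$, Theorem~\ref{mr} gives that $M$ is also bounded on $X'$. Thus both $\|M\|_X$ and $\|M\|_{X'}$ are finite, and the hypotheses that drive the general ``if $M$ is bounded on $X$ and on $X'$ then $T$ is bounded on $X$'' principle are in place.

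To run the principle, I would take $f\in X$ and use the representation formula (\ref{repr}) to write
\[
\|Tf\|_X = \sup_{g\ge 0,\ \|g\|_{X'}\le 1}\int_{\mathbb{R}^n}|Tf(x)|\,g(x)\,dx.
\]
For each such $g$, apply the estimate from \cite{L}, namely $\int_{\mathbb{R}^n}|Tf\cdot g|\,dx \le c\int_{\mathbb{R}^n}Mf(x)\,Mg(x)\,dx$, and then H\"older's inequality (\ref{hold}) for the pair $(X,X')$ to get
\[
\int_{\mathbb{R}^n}Mf(x)\,Mg(x)\,dx \le \|Mf\|_X\,\|Mg\|_{X'} \le \|M\|_X\,\|M\|_{X'}\,\|f\|_X\,\|g\|_{X'}.
\]
Taking the supremum over all admissible $g$ yields $\|Tf\|_X\le c\,\|M\|_X\,\|M\|_{X'}\,\|f\|_X$, which is the desired bound.

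There is essentially no obstacle here: the substance of Corollary~\ref{corol} is contained in Theorem~\ref{mr} (which supplies boundedness of $M$ on the associate space) and in the pointwise bound of \cite{L} (which converts joint boundedness of $M$ on $X$ and $X'$ into boundedness of $T$ on $X$). The only minor point to keep in mind is that the estimate from \cite{L} is formulated for suitably nice $f$ and $g$; one would apply the argument to, say, bounded compactly supported $f$ and extend by density in $X$, using the Fatou property (iii) of Definition~\ref{BFS} to pass to the limit. Apart from this routine approximation, the proof is a direct concatenation of Theorem~\ref{mr} with a standard duality argument for Banach function spaces.
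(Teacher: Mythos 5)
Your proof is correct and follows essentially the same route as the paper: apply Theorem~\ref{mr} to get boundedness of $M$ on $X'=L^{p'(\cdot)}_{w^{-1}}$, invoke the estimate $\int|Tf\,g|\,dx\le c\int Mf\,Mg\,dx$ from \cite{L}, and close the argument with the H\"older and representation formulas (\ref{hold}) and (\ref{repr}). Your extra remark on first treating nice $f,g$ and then passing to the limit via the Fatou property is a reasonable precaution that the paper leaves implicit, but it does not change the substance of the argument.
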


As we have mentioned above, it was shown in \cite{CFN} that if $p(\cdot)\in LH({\mathbb R}^n)$ and $w\in A_{p(\cdot)}$, then $w(\cdot)^{p(\cdot)}\in A_{\infty}$ and $M$ is bounded on $L^{p(\cdot)}_w$.
Therefore, Corollary \ref{corol} implies the following less general result.

\begin{cor}\label{corol1}
If $p(\cdot)\in LH({\mathbb R}^n)$ and $w\in A_{p(\cdot)}$, then $T$ is bounded on $L^{p(\cdot)}_w$.
\end{cor}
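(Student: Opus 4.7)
The plan is to deduce Corollary \ref{corol1} as an immediate consequence of Corollary \ref{corol} together with the known characterization of the boundedness of the maximal operator on $L^{p(\cdot)}_w$ under log-Hölder regularity. There is essentially no new work to do: all of the analytic content has already been absorbed into Corollary \ref{corol}, and the $LH(\mathbb{R}^n)$ hypothesis exists precisely to supply both missing ingredients of that corollary.

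Concretely, I would proceed as follows. First, invoke the result of Cruz-Uribe--Fiorenza--Neugebauer \cite{CFN} (also \cite{CDH}), which states that under $p(\cdot)\in LH({\mathbb R}^n)$, the condition $w\in A_{p(\cdot)}$ is equivalent to the boundedness of $M$ on $L^{p(\cdot)}_w$. This immediately gives the first hypothesis of Corollary \ref{corol}. Second, recall, as noted explicitly in the discussion just before the statement, the additional fact proved in \cite{CFN} that the combination $p(\cdot)\in LH({\mathbb R}^n)$ and $w\in A_{p(\cdot)}$ also forces $w(\cdot)^{p(\cdot)}\in A_{\infty}$; this supplies the second hypothesis. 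With both hypotheses verified, Corollary \ref{corol} applies and yields the boundedness of $T$ on $L^{p(\cdot)}_w$.

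Since the proof is a one-line citation chain, there is no genuine obstacle to overcome; the only thing worth being careful about is attribution, namely making clear that the two inputs from \cite{CFN} (boundedness of $M$ and the $A_{\infty}$ property of $w^{p(\cdot)}$) are what reduces this weighted variable-exponent Calderón--Zygmund result to the previously established Corollary \ref{corol}. Accordingly, the write-up should consist of essentially two sentences: one recalling the two facts from \cite{CFN} and one applying Corollary \ref{corol}.
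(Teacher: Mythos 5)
Your proposal matches the paper's own proof exactly: both deduce Corollary~\ref{corol1} from Corollary~\ref{corol} by citing the result of Cruz-Uribe--Fiorenza--Neugebauer \cite{CFN} that $p(\cdot)\in LH({\mathbb R}^n)$ and $w\in A_{p(\cdot)}$ together imply both $w(\cdot)^{p(\cdot)}\in A_{\infty}$ and the boundedness of $M$ on $L^{p(\cdot)}_w$. This is correct and is precisely the one-line citation chain the author uses.
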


Notice that a closely related result was very recently proved in \cite{INK}.

\vskip 3mm
{\bf Acknowledgement.}
I am grateful to Alexei Karlovich for valuable remarks on an earlier version of this paper.
Also I would like to thank the anonymous referee for detailed comments that improved the presentation.

\end{document}